\theoremstyle{plain}
\newtheorem{thm}{Theorem}[section]
\newtheorem{pro}[thm]{Proposition}
\newtheorem{lem}[thm]{Lemma}
\newtheorem{proposition-principale}[thm]{Proposition principale}
\newtheorem{thm-principal}{Main Theorem}
\theoremstyle{definition}
\newtheorem{eg}[thm]{Example}
\newtheorem{rem}[thm]{Remark}
\newenvironment{defi-G}
{\noindent{\bf Definition.}\it}{\\}
\newenvironment{thm-M}
{\noindent{\bf Main Theorem.}\it }{}
\newenvironment{thm-AA}
{\noindent{\bf Theorem A'.}\it}{\\ }
\newenvironment{thm-A}
{\noindent{\bf Theorem A.}\it}{\\ }
\newenvironment{thm-B}
{\noindent{\bf Theorem B.}\it} 
\newenvironment{thm-BB}
{\noindent{\bf Theorem B'.}\it}
\newenvironment{thm-C}
{\noindent{\bf Theorem C.}\it}
\def\vv{\vspace{0.1cm}}
\def\C{\mathbb{C}}
\def\R{\mathbb{R}}
\def\Q{\mathbb{Q}}
\def\H{\mathbb{H}}
\def\Z{\mathbb{Z}}
\newcommand{\norm}[1]{{\left\Vert#1\right\Vert}}
\newcommand{\abs}[1]{\left\vert#1\right\vert}
\def\jj{{\sf{j}}}
\def\T{{\mathrm{T}}}
\def\J{{\text{\sc{j}}}}
\def\P{\mathbb{P}}
\def\Aut{{\sf{Aut}}}
\def\Sym{{\sf{Sym}}}
\def\NS{{\mathrm{NS}}}
\def\Hyp{{\mathbb{H}}}
\def\vol{{\rm{vol}}}
\def\PGL{{\sf{PGL}}\,}
\def\GL{{\sf{GL}}\,}
\def\End{{\sf{End}}\,}
\def\Lat{{\mathrm{L}}}
\def\det{{\sf{det}}}
\newcommand{\Id}{{\rm Id}}
\def\Pic{{\mathrm{Pic}}}
\def\Sym{{\text{Sym}}}
\def\dist{{\sf{dist}}}
\newcommand{\serge}[1]{{\color{red}*}\marginpar{\tiny  \color{red} SC: #1}}
\numberwithin{equation}{section}       
\begin{document}

\setlength{\baselineskip}{0.56cm}        
%
%
\title[Parabolic automorphisms: Orbits and Betti maps]
{Parabolic automorphisms of hyperkähler manifolds: Orbits and Betti maps}
\date{2023/2024}
\author{Ekaterina Amerik and Serge Cantat}
\address{CNRS, IRMAR - UMR 6625 \\ 
Universit{\'e} de Rennes 
\\ France}
\address{Laboratory of Algebraic Geometry \\ 
Higher School of Economics\\
Moscow, Russia\\
also: IMO \\ Universit\'e Paris-Saclay\\
Orsay, France}

\email{ekaterina.amerik@gmail.com}
\email{serge.cantat@univ-rennes.fr}

%
%

%
%

%
%

\begin{abstract}
We study parabolic automorphisms of irreducible holomorphically symplectic manifolds with a lagrangian 
fibration.
Such automorphisms are (possibly up to taking a power) fiberwise translations on smooth fibers, and their orbits in a general fiber are dense (\cite{AV}).
We provide a simple proof that the associated Betti map is of maximal rank, in particular, the set of fibers where the induced translation is of finite order is dense as well.


\noindent{\sc{R\'esum\'e.}} Nous étudions les automorphismes paraboliques des variétés symplectiques holomorphes qui sont irréductibles et projectives. 

\end{abstract}

\maketitle

\setcounter{tocdepth}{1}

\tableofcontents

\vfill


{\small{ The work of the first author is partially supported by ANR project FANOHK and by HSE University Basic Research program. The research activities of the  second author  are supported by the European Research Council (ERC GOAT 101053021). }}

\pagebreak

\section{Introduction} 

\subsection{The dynamics of parabolic automorphisms}  

\subsubsection{} 
Let $X$ be an irreducible hyperk\"ahler (or ``holomorphic symplectic'') manifold of dimension $2g$.
This means that 
\begin{enumerate}[\rm (a)]
\item $X$ is a simply-connected compact K\"ahler complex manifold;
\item there is a holomorphic $2$-form $\sigma$ on $X$ which is symplectic, i.e. $\sigma^g$ is a non-vanishing holomorphic form of top degree; 
\item $\sigma$ is unique up to a nonzero multiplicative factor. 
\end{enumerate}

\subsubsection{} On $H^2(X, \Z)$ there is a non-degenerate integral quadratic form $q$ of signature $(3, b_2-3)$, the Beauville-Bogomolov form 
(see~\cite{huybrechts:survey2003}, \S 23.4). The signature of $q$ on $H^{1,1}(X; \R)$
is $(1, h^{1,1}(X)-1)$, so that the projectivization of the positive cone 
\begin{equation}
\{u\in H^{1,1}(X;\R)\; ; \; q(u,u)>0\}
\end{equation}
 can be viewed as a model of the hyperbolic space. We shall denote by $\Hyp_X$ this hyperbolic space, its dimension is $h^{1,1}(X)-1$. Its boundary $\partial \Hyp_X$ is the projectivization of the isotropic cone $\{u\in H^{1,1}(X;\R)\; ; \; q(u,u)=0\}$. 
 
 \subsubsection{} We denote by $\NS(X)$ the Néron-Severi group of $X$, 
 \begin{equation}
 \NS(X)=\H^{1,1}(X;\R)\cap \H^2(X;\Z).
 \end{equation}
  If $L$ is a line bundle on $X$, we denote by $[L]\in \NS(X)$ its Chern class.
 
 \subsubsection{}\label{par:loxo_para_elli} The group $\Aut(X)$ acts by isometries on $H^2(X; \Z)$ with respect to $q$ and preserves the Hodge decomposition, so that it acts also by isometries on $H^{1,1}(X;\R)$ and on $\Hyp_X$. As described in~\cite{Ratcliffe:book} for instance, there are three types of isometries of hyperbolic spaces, hence three types of automorphisms: elliptic, parabolic, and loxodromic. In this article, we study parabolic automorphisms. An automorphism $f$ of $X$ is  {\bf{parabolic}} if the induced automorphism $f^*$ of $H^{1,1}(X;\R)$ satisfies the following equivalent properties:
 \begin{enumerate}[\rm (a)]
 \item $f^*$ has exactly one fixed point on the boundary $\partial \Hyp_X$ and no fixed point in the interior; 
 \item  there is a positive iterate $(f^*)^n$ of $f^*$ acting as a unipotent matrix of infinite order on $H^{1,1}(X;\R)$ (resp.\ on $H^{2}(X;\Z)$);
 \item $\norm{(f^*)^n}= c(f) n^2+O(n)$ for some positive constant $c(f)$. (Here, $\norm{\cdot}$
 is any norm on $\End(H^{1,1}(X;\R))$ or $\End(H^2(X;\R))$.) 
 \end{enumerate} 
We refer to the Appendix for references and a proof of (c).

\subsubsection{} Let $f\in \Aut(X)$ be parabolic. Its fixed point on the boundary  $\partial\Hyp_X$ corresponds to a line in $H^{1,1}(X;\R)$ which is fixed pointwise by $f^*$; this line is integral: it is generated by some primitive isotropic class $\ell_f\in \NS(X)$. 
Moreover, the nef cone of $X$ being closed and $\Aut(X)$-invariant, we can choose $\ell_f$ to be the class of some nef line bundle. This uniquely determines $\ell_f$. 

Since $\Pic^0(X)=0$, there is a unique nef line bundle $L_f$ such that $[L_f]=\ell_f$, and then $f^*L_f=L_f$.

\subsubsection{}\label{par:lagrangian_conjecture} The so-called Lagrangian Conjecture (which seems to have been stated independently by several people, including  Hassett and Tschinkel, Huybrechts, and Sawon), also known as the Hyperk\"ahler SYZ Conjecture, says that a nef line bundle $L$ with $q([L],[L])=0$ should be semi-ample: this means that  $L^{\otimes n}$ should be base-point-free for large positive integers $n$. This conjecture has been  verified in all known examples and, applied to $L_f$, it says that the linear system of sections of $L_f^{\otimes n}$ defines a morphism 
\begin{equation}
p_f\colon X\to B 
\end{equation} 
with 
connected fibers of strictly positive dimension. According to Matsushita \cite{Mats}, such a morphism is a lagrangian fibration, which means that the smooth fibers of $p_f$ are lagrangian tori. 
The base $B$ of the fibration is a normal projective variety of Picard number $1$, which a priori can have quotient singularities. 

Then, there is an automorphism $f_B$ of $B$ such that 
\begin{equation}
p_f\circ f= f_B\circ p_f,
\end{equation} 
and it can be shown that $f_B$ has finite order  (\cite{lobianco:padic}). Thus, for some $k\geq 1$,
\begin{enumerate}[\rm (1)]
\item the action of $(f^k)^*$ on the $H^2(X;\Z)$ is unipotent, and of infinite order;
\item $p_f\circ f^k=p_f$  
\item $f^k$ acts as a translation on each smooth fiber (\cite{AV}, Proposition 3.8).
\end{enumerate}

\subsubsection{} Theorem 3.11 of  \cite{AV} shows that the orbits of $f^k$ must be dense in the euclidian topology on almost all smooth fibers of $p_f$. A natural question to ask is whether one nevertheless often encounters smaller orbit closures. For example, is the set of $b\in B$ such that $f^k$ is of finite order on $X_b$ (i.e.\ acts as a translation by a torsion element) dense in $B$? 
Our main theorem answers this question positively (see below for the definitions of translation vector and maximal variation).  

\vv

\begin{thm-A}
Let $X$ be an irreducible hyperk\"ahler manifold of dimension $2g$. Let $f$ be a parabolic automorphism of $X$ with an invariant fibration $p_f\colon X\to B$, and choose $k\geq 1$ such that $p_f\circ f^k=p_f$. Then, 

\vv
\noindent{\rm{(1)}} for any $p\in \{1, 2, \ldots, g\}$, there is a positive constant $c_p(f)$ such that
$$\norm{(f^n)^*}_{H^{p,p}(X;\R)}=c_p(f)n^{2p}+O(n^{2p-1});$$

\vv

\noindent{\rm{(2)}}  the translation vector of $f^k$ has maximal variation;

\vv
\noindent{\rm{(3)}} for any $s\in \{1, 2, \ldots, g\}$,
the subset of $B$ defined by 
$$
D_s(f^k) = \{b\in B\; ; \; {\text{ the closure of any orbit of $f^k_{\vert X_b}$ has dimension $s$ in $X_b$}}\} 
$$
is dense in $B$ for the euclidean topology.
\end{thm-A}

For instance, the following sets are dense in $B$: 
\begin{align}
 D_g &=\{b\in B\; ; \; {\text{every orbit of $f^k$ in $X_b$ is dense in $X_b$}}\}  \\
D_0 &= \{b\in B\; ; \; {\text{ $f^k_{\vert X_b}$ has finite order}}\}. \quad \quad\quad 
 \end{align} 
 
Note that we assume in Theorem~A that $f$ preserves a lagrangian fibration; as explained in Section~\ref{par:lagrangian_conjecture}, this is satisfied in all known examples. 

\subsubsection{} 

When $X$ is projective, Theorem~A is not new: it can be derived from results of Bakker, Gao, and Voisin. This is explained in Section~\ref{par:literature_gao}. Theorem~A has also been proven for all surfaces in~\cite{Cantat:Trans, Cantat-Dujardin:Transformation-Groups}, but it seems 
difficult to apply the same methods in higher-dimensional cases~({\footnote{The surfaces in \cite{Cantat-Dujardin:Transformation-Groups} are Kähler but do not have to be hyperk\"ahler. Indeed, if $X$ is a  compact complex surface, the intersection form defines a quadratic form on the second cohomology group of~$X$. If the surface is Kähler, its restriction to  $H^{1,1}(X;\R)$ is non-degenerate and of signature $(1,h^{1,1}(X;\R)-1)$. Thus, automorphisms of $X$ can also be classified into three types, elliptic, parabolic, or loxodromic. By a theorem of Gizatullin, every parabolic automorphism of a compact Kähler surface preserves a genus $1$ fibration (with finite order action on the base except when $X$ is a torus). }}).
The aim of this paper is to describe a new proof of it, and to extend the result to non-projective manifolds; on our way, we also extend a result of Lo Bianco (see Theorems~B and~C).

\subsection{Betti coordinates, translation vector, maximal variation}\label{par:betti_vector_variation}

\subsubsection{Betti coordinates}\label{par:intro_betti_coordinates} Let $p\colon X\to B$ be a fibration of a compact complex manifold. We shall always denote by $B^\circ$ the subset of regular values of $p$ where, by definition, the singularities of $B$ are put in $B\setminus B^\circ$. Suppose that for every $b\in B^\circ$, the fiber $X_b=p^{-1}(b)$ is a torus, isomorphic to $\C^g/\Lat(b)$ for some lattice $\Lat(b)\subset \C^g$.  

Let $U$ be a simply connected open subset of $B^\circ$, and $b_0$ a point of $U$. Suppose we have a holomorphic  section $s\colon U\to X$ of $p$.
 If one fixes a basis of $H_1(X_{b_0}; \Z)$, it can be propagated  continuously  to the fibers $X_b$ for $b\in U$ and this gives a trivialization $H^1(X_U;\Z)\simeq \Z^{2g}$. Then, there is a 
unique diffeomorphism 
\begin{equation}
\Phi\colon X_U\to U\times \R^{2g}/\Z^{2g}
\end{equation} 
such that 
\begin{enumerate}[\rm (i)]
\item  $p={\mathrm{pr}}_{U}\circ \Phi$, where ${\mathrm{pr}}_{U}$ is the projection from $U\times \R^{2g}/\Z^{2g}$ to $U$,
\item $\Phi\circ s_{\vert U}(b)=(b,0)$ for all $b\in U$, 
\item $\Phi\colon X_b\to \{b\}\times \R^{2g}/\Z^{2g}$ is an isomorphism of Lie groups for all $b\in U$,
\item $\Phi_*$ maps the basis of $H_1(X_U;\Z)$ to the canonical basis of $$\Z^g\simeq H_1(U\times \R^{2g}/\Z^{2g};\Z).$$ 
\end{enumerate}
This diffeomorphism is real analytic. We shall refer to $\Phi$ as the Betti diffeomorphism; points of $U\times \R^{2g}/\Z^{2g}$ can be written $(u,x)$ with $u$ in $U$ and $x=(x_1, \ldots, x_{2g})$ in $\R^{2g}$ (modulo $1$), and we shall refer to these as the {\bf{Betti coordinates}} (determined by $\Phi$). We refer to the triple  given by $U$, the section $s_{\vert U}$, and the basis of $H_1(X_U,\Z)$ as the {\bf{Betti datum}} used to define $\Phi$.

\subsubsection{The translation vector} Let $f$ be an automorphism of $X$ such that $p\circ f=p$ and $f$ acts by translations on the general fibers of $p$. Conjugating $f$ by $\Phi$ one gets a diffeomorphism of $U\times \R^{2g}/\Z^{2g}$ of type 
\begin{equation}
(u,x)\mapsto (u, x+ t_f(u))
\end{equation}
for some real analytic function $t_f\colon U\to \R^{2g}$ (or to $\R^{2g}/\Z^{2g}$). By definition, $t_f$ is the {\bf{translation vector}} of $f$ (in the Betti coordinates defined by $\Phi$). As we shall see in Section~\ref{par:translation_vectors}, the generic rank of $t_f$ is an even integer and this number does not depend on the choice of Betti coordinates. We shall refer to it as the {\bf{rank}} of the translation vector. The maximal possible rank is $\min(2\dim_{\C}(B), 2g)$. 
For lagrangian fibrations $\dim_{\C}(B)=g$ so in what follows we assume $\min(2\dim_{\C}(B), 2g)=2g$ for simplicity. Then,  we say that $t_f$ has {\bf{maximal rank}}, or equivalently that $t_f$ has {\bf{maximal variation}},  if its generic rank is $2g$. Lemma~\ref{lem:open_mapping_translation} shows that the variations of $t_f$ are maximal if and only if the image of $t_f$ is open in $\R^{2g}$, if and only if $t_f$ is an open mapping.

This explains the meaning of Assertion~(2) in Theorem~A and shows that this assertion implies Assertion~(3) (see Section~\ref{par:maximal_variation} for more on $t_f$ and a detailed proof of how (3) is derived from (2)). 

\subsubsection{} Now, suppose that $s\colon B\to X$ is a global holomorphic section of $p$. For every $b\in B^\circ$, we can declare that $s(b)$ is the neutral element of $X_b$ and, doing so, $X_b$ becomes a commutative complex Lie group. 

Then, $f\circ s$ is a new section of $p$, and the action of $f$ on $X_b$ is the translation by $f\circ s(b)-s(b)$ for every $b\in B^\circ$. Let us now set $t=f\circ s$ and forget about~$f$. The dynamical properties of $f$ can be translated into properties of $t$. More precisely, consider a Betti diffeomorphism $\Phi$ (determined by some choice of Betti datum) and set
\begin{equation}
t_f={\mathrm{pr}}_{\R^{2g}/\Z^{2g}}(\Phi\circ t)\mod \Z^{2g}.
\end{equation} 
This map $u\in U\mapsto t_f(u)={\mathrm{pr}}_{\R^{2g}/\Z^{2g}}(\Phi(t(u)))\mod \Z^{2g}$ is usually called the {\bf{Betti map}} associated to $t$ (and the chosen Betti datum). Thus, {\emph{the translation vector of $f$ has maximal variation if and only if the Betti map is generically of maximal rank $2g$}} 
. This property of the Betti map has been studied a lot, at least in the case when $X$ is projective, as explained below.


\subsection{General results on Betti maps}\label{par:literature_gao}

Let us explain how Theorem~A can be 
derived from  works of Gao, Voisin and Bakker when $X$ is projective.

\subsubsection{}  
The series of papers~\cite{ACZ, Gao1, Gao2} studies in a systematic way the Betti maps for fibrations in arbitrary dimension. They rely on theorems from functional transcendance theory, notably André’s theorem concerning the 
independence of abelian logarithms \cite{andre:1992} and the Ax–Schanuel theorem from~\cite{mok-pila-tsimerman}. 

Let us focus on \cite{Gao1} and \cite{Gao2}, since they contain optimal results regarding the variations of the Betti maps (i.e.\  of translation vectors). 
The tools used in \cite{Gao1} being somewhat simpler, we base our explanation on it and explain how it is related to Theorem~A. 

\subsubsection{}   
In \cite[Theorem 1.3]{Gao1}, Gao considers an abelian scheme $p\colon {\mathcal A}\to S$ of relative dimension $g$ over a smooth complex algebraic variety $S$, with a section $\xi$ (or more generally a multisection) generating ${\mathcal A}$.  
He proves that the associated Betti map is generically of rang $2g$ if the following three properties are satisfied:
\begin{enumerate}[\rm (a)]
\item the modular map $\mu: S\to {\mathcal A}_g$ is quasi-finite~(\footnote{Here, ${\mathcal A}_g$ is the space of polarized abelian varieties in dimension $g$ with respect to some polarization type and some level structure; since they are not relevant, we simply write ${\mathcal{A}}_g$.}); 
\item  $\dim(S)\geq g$, and 
\item the geometric generic fiber of the family is simple~(\footnote{See also \cite{Gao2},
where it is established that the non-maximality of the rank of the Betti map associated to a generating $\xi$ implies the existence of a quotient abelian scheme of low variation. For simplicity's sake we prefer to keep 
\cite{Gao1} as our main reference: it is almost equally quick to get applications to hyperk\"ahler manifolds 
from \cite{Gao1}, see \cite{Bak}.}). 
\end{enumerate}
To apply this result to our context, we can take $S$ to be the set of regular values $B^\circ$ of $p_f$. 
Then (b) is satisfied by construction.  Moreover, a  recent theorem of Bakker proving Matsushita's conjecture states that either $\mu$ is quasi-finite on a dense open subset of $S$ or $\mu$ is constant, i.e.\ the family is isotrivial  (see \cite{Bak}). 
Thus, in the non-isotrivial case, Gao's theorem is close to establish Assertion~(2) of Theorem~A; nevertheless, there is a subtlety here: in general it is not true that the geometric generic fiber is simple, though the scheme-theoretic generic fiber is simple and has Picard number $1$ (see \cite{Oguiso-Picard}). 

\begin{eg} Let $Y$ be a  K3 surface with a genus one fibration $h:Y\to \P^1$.
Set $X=Y^{[2]}={\mathrm{Hilb}}^2(Y)$. Then $X$ is hyperkähler and is fibered over $\P^2={\mathrm{Sym}}^2(\P^1)$:  the fiber over $a+b$, $a\neq b$, is $h^{-1}(a)\times h^{-1}(b)$. The generic fiber of this fibration $h^{[2]}: X\to \P^2$ ceases to be simple after a degree two extension of the function field of $\P^2$, corresponding to the map $(a,b)\mapsto a+b$ from $\P^1\times \P^1$ to $\P^2$. \end{eg}

For most applications, though, Gao's theorem 
works with some extra argument: see for example \cite[Corollary 9]{Bak} for a density statement similar to what we discuss here. 

Finally, coming back to the setting of  hyperk\"ahler manifolds, the case of isotrivial lagrangian fibrations is covered in a paper by Voisin \cite{Voisin-torsion} together with the case $\dim(X)\leq 8$ (i.e. $g\leq 4$).

\begin{rem}  In an earlier paper by Andr\'e, Corvaja, and Zannier 
\cite{ACZ}, the authors raise the question whether the Betti map associated to a section $\xi$ is generically of rank $2g$ under milder conditions:
$\Z\xi$ is Zariski dense in $\mathcal A$ (that is, $\xi$ generates $\mathcal A$), $\mathcal A$ has no fixed part over any \'etale finite covering 
of $S$,  $\mu$ is quasifinite, and (as above) $\dim(S)\geq g$. In \cite{ACZ}, the positive answer to this question is obtained in  dimension $g\leq 3$, and also in all dimensions under the additional assumption that the abelian scheme has no non-trivial endomorphism over any finite covering of the base. However in \cite{Gao2}, there is a counterexample for $g=4$ (Example 9.4). Clearly, this example is not hyperk\"ahler.
\end{rem}

\subsection{Strategy of proof} 
With the previous results in mind, the reason why we wrote this text is twofold. 
Firstly, Theorems~A and~B now hold uniformly, for projective and non-projective hyperkähler manifolds, and for isotrivial and non-isotrivial fibrations. 
Secondly, the proof follows a new route.
Gao obtains his result as a consequence of mixed Ax-Schanuel theorem. 
On one side, our argument is simpler because it relies on more basic principles; on the other side it applies only to the hyperkähler case, because we rely on Verbitsky's theorem~\cite{verbitsky:cohom}, Theorem 1.5, on the cohomology of hyperkähler manifolds (this is used to get Assertion~(1) of Theorem~A, which is -- in turn -- used to derive Assertion~(2)). 

The proof is done for projective hyperkähler manifold first, and then generalized  to the non-projective case. The argument for this last step is of independent interest and applies recent results of Soldatenkov and Verbitsky. 

\subsection{Acknowledgements} We are grateful to Thomas Gauthier, Misha Verbitsky, and Claire Voisin for useful discussions. We thank Pietro Corvaja, Andrey Soldatenkov, and Umberto Zannier for interesting feedback.

\section{Hyperkähler manifolds} 

 \vspace{0.2cm}
\begin{center}
\begin{minipage}{12cm}
{\sl{ 
In this section, $X$ is a hyperkähler manifold of dimension $2g$ with a holomorphic symplectic form $\sigma$, as in the introduction, and $q$ is the Beauville-Bogomolov form.  }}
\end{minipage}
\end{center}
\vspace{0.2cm}

\subsection{The Néron-Severi group}\label{par:neron-severi-group}
We denote by $q_X$ the restriction of $q$ to $\NS(X)$. If $A$ is a ring, we set $\NS(X; A)=\NS(X)\otimes_\Z A$, hence $\NS(X)=\NS(X; \Z)$.
When $X$ is projective, there are classes $u$ in $\NS(X)$ with $q_X(u,u)>0$, for instance Chern classes of ample line bundles. Conversely, a theorem of Huybrechts shows that if such a class $u\in \NS(X)$ exists, then $X$ is projective (see~\cite{huybrechts:inventiones, huybrechts:inventiones-correction}). 

 There are three possibilities for the signature of $q_X$ on $\NS(X; \R)$
\begin{enumerate}[\rm (a)]
\item $q_X$ is non-degenerate of signature $(1, \rho(X)-1)$; 
\item $q_X$ is degenerate with one-dimensional kernel, and takes only non-positive values; in this
case, following Oguiso (see~\cite{oguiso}, page 167), we say that $q_X$ is {\bf{parabolic}};
\item $q_X$ is negative definite. 
\end{enumerate}
The second and third cases do not appear when $X$ is projective.

\subsection{The transcendental lattice}\label{par:transcendental_lattice} The transcendental lattice $\T(X)$ is, by definition, the orthogonal complement of $\NS(X)$ in $H^2(X;\Z)$. 
The Lefschetz theorem on $(1,1)$-classes implies that $\T(X)$ is the smallest subgroup of $H^2(X; \Z)$ such that $\C\sigma$ is contained in $\T(X)\otimes_\Z\C$ and $H^2(X;\Z)/\T(X)$ is torsion free. 

\subsection{Fibrations and polarizations}\label{par:lagrangian-fibration}
 Let $p \colon  X\to B$ be a holomorphic fibration, that is, a proper surjection with connected fibers and $\dim(X)>\dim(B)\geq 1$. Then, $p$ is a lagrangian fibration: 
 \begin{enumerate}[\rm (1)]
 \item its fibers are projective, and the generic fiber is an abelian variety of dimension $g=\dim(X)/2$ on which $\sigma$ vanishes (see \cite{campana:isotrivialite}, Proposition 2.1, which the author attributes to Voisin); 
 \item the base $B$ is projective too, indeed it is K\"ahler and Moishezon with rational singularities (see \cite{kamenova-lehn} Theorem 2.8, \cite{namikawa:2002} Corollary 1.7). Moreover $B$ is $\Q$-factorial with Picard number 1 (\cite{matsushita:1999})\footnote{It is generally expected that $B$ is $\P^g$; when $B$ is smooth, this is a theorem by Hwang.}.
 \end{enumerate}

 Now, set $X^\circ=X_{B^\circ}$, where $B^\circ$ is defined as in Section~\ref{par:intro_betti_coordinates}. The projection $p\colon X^\circ \to B^\circ$ is a proper submersion, the fibers of which are naturally polarized abelian varieties. Indeed, the restriction homomorphism 
\begin{equation}
H^2(X;\Z)\to H^2(X_b;\Z)
\end{equation}
has  cyclic image (this remark has been first made by Oguiso in ~\cite{Oguiso-Picard}, see for example~\cite{AV} for a self-contained proof); let $R_b\subset H^2(X_b;\Z)$ denote this cyclic group.   If $\kappa$ is a Kähler form on $X$, then there is a  unique positive scalar multiple $\alpha\kappa$ such that $R_b$  is generated by the class  $[\alpha\kappa_{\vert X_b}]$; this  integral class gives a natural polarization of $X_b$ for each $b\in B^\circ$. When $X$ is projective, we can assume that $[\alpha\kappa]$ is in $\NS(X)$. 
 
 \subsection{Automorphisms}\label{par:automorphisms}

Let $p\colon X\to B$ be a lagrangian fibration on $X$. Since $\rho(B)=1$, there is a unique 
primitive ample class $h_B$ in $\NS(B)$; we denote by $h\in \NS(X)$ its pull back by $p$: this class is nef and isotropic (i.e. $q_X(h,h)=0$). 

Let $f$ be an automorphism that preserves the class $h$. Then $f$ preserves the fibration $p$ in the following sense: there is an automorphism $f_B$ of $B$ such that $p\circ f = f_B\circ p$. 
The automorphism $f_B$ preserves $h_B$. One can also find an embedding $B\subset \P^N(\C)$ 
such that $f_B$ preserves the Fubini-Study form (restricted to $B$); we denote such a  form by $\kappa_B$:
\begin{equation}
f_B^*\kappa_B = \kappa_B.
\end{equation}
The existence of such a form $\kappa_B$ is due to Lo Bianco (see~\cite{lobianco:padic}, Lemma 3.1). Let us sketch his proof. Since the Picard group of $B$ is cyclic and $B$ is projective, 
there is an $f_B$-invariant and very ample line bundle $L_B$ on $B$. Then, $f_B$ induces a linear transformation $F_B$ of $H^0(B;L_B)$ and the Kodaira-Iitaka embedding $\iota\colon B\to \P(H^0(B;L_B)^\vee)$ is equivariant: $\iota\circ f_B=F_B\circ \iota$. On the other hand, the volume form $\vol_X:=(\sigma\wedge \overline{\sigma})^g$ induces a probability measure $\mu_B=p_*\vol_X$ on $B$ with full support which is $f_B$ invariant. Then, the invariance of $\iota_*\mu_B$ under $F_B$, the fact that $\iota(B)$ is not contained in a hyperplane of  $\P(H^0(B;L_B)^\vee)$, and the equality ${\mathrm{Supp}}(\iota_*\mu_B)=\iota(B)$ imply that $F_B$ is contained in a compact subgroup of $\PGL(H^0(B;L_B)^\vee)$. Thus, up to a linear conjugacy, $F_B$ preserves the Fubini-Study metric.

A priori $B$ can have singularities, but it does make sense to speak of such a differential form as the restriction of a form defined on the ambient space $\P^N(\C)$; this is compatible with the definitions of Varouchas as suggested in \cite{soldatenkov-verbitsky}, Remark 2.2 (see also~\cite{demailly:1985}).

\vspace{0.2cm}

\begin{thm-B} Let $p\colon X\to B$ be a lagrangian fibration of an irreducible hyperkähler manifold $X$. 
Let $f$ be an automorphism of $X$ such that $p\circ f=f_B\circ p$ for some $f_B\in \Aut(B)$. Then there is an integer $k\geq 1$ such that 
\begin{enumerate}[\rm (1)]
\item $f^k$ preserves the symplectic form $\sigma$, i.e.\ $(f^k)^*\sigma=\sigma$;
\item $f_B^k=\Id_B$, i.e.\ $f^k$ preserves each fiber of $p$. 
\end{enumerate}
\end{thm-B}

If $X$ is not projective, then $f^*\sigma=\sigma$; in other words, one can take $k=1$ in the first assertion; it is implied by $h\in \T(X)$, see Theorem 2.4 in \cite{oguiso}. 

\begin{proof} 

Let us prove Assertion~(1) (see also~\cite{cantat:milnor, oguiso}). Since $f$ is parabolic, all eigenvalues of $f^*$ on $H^*(X;\C)$ have modulus $\leq 1$. Since $f^*$ preserves the lattice $H^*(X;\Z)$, its characteristic polynomial is
a monic polynomial with integer coefficients. Thus, the eigenvalues of $f$ are roots of unity. On the other hand, $\sigma$ is unique up to a scalar factor, so $f^*\sigma=\alpha\sigma$ where $\alpha$ is the eigenvalue of $f^*$ on $H^{2,0}(X;\C)$. If $k$ denotes the order of $\alpha$, then $(f^k)^*\sigma=\sigma$. 

When $X$ is projective, Assertion~(2) is part of a theorem of Lo Bianco in~\cite{lobianco:padic}(\footnote{Assertion (2)  has been claimed already in \cite{AV}, with an explanation that Lo Bianco's argument for the projective case was valid in general. That explanation does not seem to be correct, this is why we provide a proof of (2) based on Verbitsky's idea  in Section~\ref{par:proof-lobianco-non-projective}.}). The non-projective case reduces to the projective one as follows: using the form $\kappa_B$ one defines a family of new complex structures $X_t$ on $X$ (the ``degenerate twistor deformations'' studied by Verbitsky and Soldatenkov), all fibered over $B$, such that the map $f$ remains holomorphic on each $X_t$. Since some of these complex structures are projective, the conclusion will follow from the projective case.
Details are provided in Section~\ref{par:proof-lobianco-non-projective}.
\end{proof}
 
\section{Maximal variation and Betti coordinates} \label{par:maximal_variation}

 \vspace{0.2cm}
\begin{center}
\begin{minipage}{12cm}
{\sl{ 
In this section, we study the variations of the translation vector of a parabolic automorphism of a projective hyperkähler manifold. }}
\end{minipage}
\end{center}
\vspace{0.2cm}

\subsection{The setting}\label{par:setting_section_betti} 
We suppose that $X$ is a projective hyperkähler manifold, with a parabolic automorphism $f$ that preserves a lagrangian fibration $p_f\colon X\to B$, and use the notations introduced in the previous sections.  For simplicity, 
we replace $f$ by a positive iterate to assume that 
\begin{equation}
p_f\circ f=p_f \; \text{ and } \; f^*\sigma=\sigma
\end{equation} 
as in Theorem~B.

Since $X$ is projective, we can find a multisection $S$ of $p_f$. That is, $S\subset X$ is a smooth, irreducible, $g$-dimensional  subvariety of $X$ which is generically transverse to $p_f$. Then, $p_{f\vert S}\colon S\to B$ is generically finite. Moreover, if $U$ is a sufficiently small, non-empty, open subset of $B^\circ$,
 we can find such a multisection  that  is everywhere transverse to $p_f$ above $U$, i.e.\ $p_{f\vert S \cap {p_f}^{-1}(U)}$ is a non-ramified cover from $S\cap p_f^{-1}(U)$ to $U$ of some degree $d\geq 1$. 
With such a choice, and if $U$ is  simply-connected,  there are $d$ sections $s_i$ of $p_f$ above $U$ such that $S\cap U$ is the disjoint union of the $s_i(U)$. The degree $d$ is the intersection number $([S]\cdot [X_b])$ for any fiber $X_b$. 

\subsection{Translation vectors}\label{par:translation_vectors}
Let $U\subset B^\circ$ be simply connected and let $s\colon U\to X$ be a holomorphic section of $p_f$ above $U$. Fix a basis of $H_1(X_U; \Z)$ and consider the Betti diffeomorphism $\Phi$ and the translation vector $t_f$ associated to these data. 

\begin{lem}\label{lem:open_mapping_translation}
The following properties are equivalent.
\begin{enumerate}[\rm (1)]
\item $t_f\colon U\to \R^{2g}$ is an open mapping; 
\item $t_f(U)$ contains an open subset of $\R^{2g}$;
\item $t_f\colon U\to \R^{2g}$ has maximal rank $2g$ in the complement of a proper, real analytic subset of $U$.
\end{enumerate}
If they are not satisfied  the generic rank of $t_f$ is even and $\leq 2g-2$.
\end{lem}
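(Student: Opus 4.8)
The plan is to exploit the fact that $t_f\colon U\to\R^{2g}$ is real analytic (being a component of the Betti diffeomorphism $\Phi$, which is real analytic) and that its generic rank is the relevant invariant. First I would recall the basic local structure: for a real analytic map between real analytic manifolds, the rank is lower semicontinuous, it attains its maximal value $r$ on a dense open subset $U_{\mathrm{reg}}\subseteq U$, and its complement $U\setminus U_{\mathrm{reg}}$ is a proper real analytic subset (it is the common zero locus of the $(r+1)\times(r+1)$ minors of the Jacobian, which do not all vanish identically). This already gives the equivalence of (1), (2) and (3) with "$r=2g$": if $r=2g$ then on $U_{\mathrm{reg}}$ the map is a submersion to $\R^{2g}$, hence open there, hence $t_f(U)\supseteq t_f(U_{\mathrm{reg}})$ contains an open set, and by a standard connectedness/analyticity argument (the set where $t_f$ fails to be open is contained in the proper analytic set $U\setminus U_{\mathrm{reg}}$, and near such a point one can still reach an open neighbourhood of the image using nearby regular points, because $U_{\mathrm{reg}}$ is dense) one upgrades openness on $U_{\mathrm{reg}}$ to openness on all of $U$; the reverse implications are trivial from the semicontinuity of rank.

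The substantive point — and the reason the statement asserts something nontrivial — is the last sentence: if $r<2g$ then in fact $r\le 2g-2$, i.e.\ the generic rank is \emph{even}. For this I would use the complex-analytic geometry underlying the situation, not just real analyticity. The image of $t_f$, or rather its closure, should be describable via a complex-analytic object: the translation vector $t_f$ is, up to the Betti trivialization, the difference $f\circ s-s$ read in the universal cover $\C^g$ of the fibers, i.e.\ it comes from a holomorphic map $\tilde t\colon U\to\C^g\cong\R^{2g}$ (a lift of a holomorphic section-difference to the period coordinates), composed with the real-linear identification $\C^g\simeq\R^{2g}$ given by the Betti lattice basis. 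A holomorphic map $U\to\C^g$ ($U$ a connected complex manifold) has, at every point, a complex-linear differential of some complex rank $c$; its generic complex rank is a well-defined integer $c_{\max}$, and the generic \emph{real} rank of the underlying real map is exactly $2c_{\max}$ — because the real differential is the realification of a complex-linear map, whose rank is always twice the complex rank. Hence the generic real rank of $t_f$ is automatically even, and if it is not $2g$ it is $\le 2g-2$.

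The main obstacle, and the step I would be most careful about, is justifying that $t_f$ genuinely factors (on the level of differentials, which is all that is needed for the rank count) through a holomorphic map into a complex vector space with the \emph{standard} complex structure pulled back via the Betti basis. The subtlety is that the Betti basis of $H_1(X_b;\Z)$ is a \emph{real} basis of the fiber $X_b\simeq\C^g/\Lat(b)$, and the complex structure of $\C^g$ varies with $b$; so "$t_f$ is holomorphic" is not literally true in the fixed real coordinates $x\in\R^{2g}$. What is true is that $f\circ s - s$, being a difference of holomorphic sections, defines a holomorphic section of the relative tangent space, equivalently (after going to the universal cover and using that $f^*\sigma=\sigma$, so $f$ acts by a locally constant translation vector in the flat structure) a holomorphic function $U\to\C^g$ with values in the period domain; and the comparison map to the flat Betti coordinates, while only real-analytic, is fiberwise \emph{complex-linear at each point} once one remembers the varying complex structure. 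Rank is a pointwise notion, so at each $u\in U$ the real differential of $t_f$ is the composition of the complex-linear differential of the holomorphic map with a fixed real-linear isomorphism $\R^{2g}\to\C^g$ that intertwines $J_{\mathrm{std}}$ on the target with the (point-dependent) complex structure coming from the period map — and the key fact is that such a composition is $\C$-linear for the appropriate structure, hence has even real rank. I would phrase this cleanly by observing: the generic rank of $t_f$ equals the generic rank of its derivative $\dd t_f$; at a generic point this derivative can be computed from the variation of periods, which is holomorphic; therefore it is even. Once this parity is established, combined with the first paragraph's rank-stratification argument, all three equivalences and the final bound follow, and $\min(2\dim_\C(B),2g)=2g$ identifies $2g$ as the maximal possible value in the Lagrangian case.
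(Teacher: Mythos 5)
Your proposal is correct and follows essentially the same route as the paper: real analyticity of $t_f$ gives the rank stratification and the equivalence of (1)--(3) with generic rank $2g$, while the parity of the generic rank comes from the pointwise intertwining $(Dt_f)_u\circ \jj_U=\jj(u)\circ (Dt_f)_u$, where $\jj(u)$ is the complex structure on $\R^{2g}$ transported from the fiber $X_u$ via $\Phi$ --- exactly the point-dependent complex-linearity you identify via period coordinates. The paper's own argument is likewise only a sketch, so no further comparison is needed.
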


We just sketch the proof of this lemma because it is already proven in~\cite{ACZ, Gao1, Cantat-Dujardin:Transformation-Groups}.  The first remark is that the fibers of the Betti projection $\pi_2\circ \Phi\colon X_U\to \R^{2g}/\Z^{2g}$ are complex submanifolds of $X_U$. The second remark is that,  viewed in Betti coordinates,  $t_f$ is just the projection of $t=f\circ s - s$ on $\R^{2g}/\Z^{2g}$ and $t$ is a holomorphic function (its differential $Dt$ intertwines the complex structure $\jj_U$ on $TU$ with the complex structure $\jj_X$ on $TX$).  
With this at hand, the first consequence is that $t_f$ is real analytic, and in particular the maximum of the rank of $(Dt)_u$, $u\in U$, is attained on the complement of a proper real analytic subset of $U$. Then, let $\jj(u)$  be the (translation invariant) complex structure  on $\R^{2g}/\Z^{2g}$ (or equivalently $\R^{2g}$) obtained from the restriction of $\jj_X$ to $X_u$ via $\Phi$:
\begin{equation}
\jj(u)(v)=\Phi_*(\jj_X(\Phi^{-1}_*v))
\end{equation}
 for every  vector  $v$  tangent to $\R^{2g}/\Z^{2g}$.
Then $(\jj(u))$  is a real analytic family of  complex structures and the second consequence is $(Dt_f)_u\circ \jj_U=\jj(u)\circ (Dt_f)_u$ for every $u\in U$. Thus, 
the generic rank of $t_f$ is even. These properties directly imply the lemma.

\begin{lem} The property 
``{\emph{$t_f$ has maximal variation}}''
does not depend on the Betti datum chosen to define the Betti coordinates. 
\end{lem}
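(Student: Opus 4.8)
The plan is to track how the translation vector $t_f$ transforms when one changes the Betti datum $(U,\,s,\,\text{basis of } H_1(X_U;\Z))$, and to check that its generic rank — in particular whether it equals $2g$ — is unaffected by each of the three ingredients. First I would fix $U$ and the basis of $H_1(X_U;\Z)$ and vary the section: here $t_f$ does not change at all. The reason is that $f_{\vert X_u}$ is translation by a well-defined element $v_f(u)$ of the torus $X_u$ (translations of a torus are canonically parametrized by the torus itself, with no choice of origin involved), and $t_f(u)$ is simply the image of $v_f(u)$ under the group isomorphism $X_u\simeq\R^{2g}/\Z^{2g}$ furnished by $\Phi$; that isomorphism is pinned down up to a translation by property~(iv), i.e.\ by the normalization on the $H_1$-basis, and its effect on translation vectors depends only on its linear part — the isomorphism $H_1(X_u;\Z)\otimes_\Z\R\simeq\R^{2g}$ attached to the basis — which is independent of $s$. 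Concretely, one checks by uniqueness that the Betti diffeomorphism $\Phi'$ attached to another section $s'$ equals $\Phi$ followed by the fiberwise translation by $-\phi$, where $\phi(u)={\mathrm{pr}}_{\R^{2g}/\Z^{2g}}(\Phi(s'(u)))$; conjugating $f$ by $\Phi'$ then yields $(u,x)\mapsto(u,x+t_f(u))$ with exactly the same $t_f$.

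Next I would keep $U$ and $s$ and change the basis of $H_1(X_U;\Z)$. Since $U$ is simply connected, the change of basis is a locally constant, hence globally constant, matrix $M\in\mathrm{GL}_{2g}(\Z)$, and the corresponding Betti diffeomorphism is $\Phi$ composed with $\mathrm{id}_U\times M^{-1}$ — one verifies at once that this still satisfies (i)--(iv) relative to the new basis. Hence the new translation vector is $u\mapsto M^{-1}(t_f(u))$, so $(Dt_f)_u$ changes only by postcomposition with the linear automorphism $M^{-1}$, and its rank at every $u\in U$ is unchanged. Combining the two steps: any two Betti data over the same $U$ produce translation vectors that differ by a fixed element of $\mathrm{GL}_{2g}(\Z)$, hence have the same generic rank; in particular one has maximal variation if and only if the other does.

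It remains to remove the dependence on $U$, and I would do this by a connectedness argument based on Lemma~\ref{lem:open_mapping_translation}. The manifold $B^\circ$ is connected (it sits inside the regular locus of the irreducible variety $B$ and is obtained from it by deleting a proper closed analytic subset), and every point of $B^\circ$ has arbitrarily small simply connected open neighbourhoods carrying a holomorphic section of the submersion $p_f\colon X^\circ\to B^\circ$. Given Betti data over $U_1$ and over $U_2$, I would choose a finite chain of such neighbourhoods $U_1=W_0,W_1,\ldots,W_n=U_2$ with $W_i\cap W_{i+1}\neq\emptyset$, equip each $W_i$ with a Betti datum, and note that over each nonempty overlap the two relevant translation vectors have equal rank at every point, by the previous two steps. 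On the other hand, Lemma~\ref{lem:open_mapping_translation} tells us that a translation vector has maximal variation on an open set $W$ precisely when its rank equals $2g$ at some point of $W$ — equivalently, on a dense open subset of $W$ — and this property visibly passes to and from any nonempty open subset of $W$. Propagating the equivalence along the chain then gives the claim. The only genuinely substantive point in the whole argument is the section-independence of the first step; the remainder is bookkeeping, the fiddliest part of which is the chain argument just described.
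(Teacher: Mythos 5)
Your proof is correct and follows essentially the same route as the paper's: section changes leave $t_f$ untouched, a change of $H_1$-basis postcomposes $t_f$ with a fixed element of $\GL_{2g}(\Z)$ (which preserves generic rank), and the dependence on $U$ is removed by propagating the criterion of Lemma~\ref{lem:open_mapping_translation}(3) along overlapping charts using the connectedness of $B^\circ$. The paper's version is just a terser statement of the same three steps, so no further comment is needed.
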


Indeed, changing the section $s$ does not change $t_f$, and changing the basis of $H_1(X_U;\Z)$ changes $t_f$ into $A\circ t_f$ for some $A\in \GL_{2g}(\Z)$, so in both cases the property ``$t_f(U)$ contains an open subset of $\R^{2g}$'' is preserved by such a change. 

To show that the property does not depend on the choice of $U$, note that if $U\cap U'$ is non-empty, then Lemma~\ref{lem:open_mapping_translation}(3)  shows that $t_f$ has maximal variation on $U$ if and only if it has maximal variation on $U\cap U'$, and then this property propagates to $U'$. Then use that $B^\circ$ is connected.

\subsection{Volumes and variations} To study the variations of $t_f$, we shall rely on the following volumic characterization of its maximal variation. If $\kappa$ is a Kähler form on $X$, and if $W$ is a complex analytic subset of (some open subset of) $X$ of dimension $m$, its volume with respect to $\kappa$ is equal to 
\begin{equation}
\vol_\kappa(W)=\int_W \kappa^m.
\end{equation}
If $W\subset X$ is closed, its volume can be computed in cohomology as the intersection product $\vol_\kappa(W)=([W]\cdot [\kappa]^m)$, where $[\kappa]$ is the class of $\kappa$ and $[W]$ is the Poincaré dual of the homology class of $W$.

\begin{lem}\label{par:local_volume_estimate} Let $U$ be an open subset of $B$ such that its closure is contained in  $B^\circ$.
Let $\kappa$ be a Kähler form on $X$.  Let $M$ be a multisection of $p_f$. Then
$\vol_\kappa(f^n(M)\cap X_U)= O (n^{2g})$, and the following properties are equivalent
\begin{enumerate}[\rm (a)]
\item  $t_f$ does not have maximal variation;
\item $\vol_\kappa(f^n(M)\cap X_U)= O (n^{2g-1})$
as $n$ goes to $+\infty$;
\item $\norm{(f^n)^*\kappa^g}_{X_U}=O(n^{2g-1})$ as $n$ goes to $+\infty$, 
where $\norm{\cdot}_{X_U}$ denotes the uniform norm on $X_U$ (for sections of $\wedge^{g,g}(T X)$).
\end{enumerate}
\end{lem}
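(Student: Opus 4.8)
The plan is to compute both the volume $\vol_\kappa(f^n(M)\cap X_U)$ and the norm $\norm{(f^n)^*\kappa^g}_{X_U}$ in terms of the translation vector $t_f$, using the real-analytic Betti trivialization $\Phi\colon X_U\to U\times\R^{2g}/\Z^{2g}$ introduced in Section~\ref{par:intro_betti_coordinates}, and then to read off the growth rate from the rank of $Dt_f$. After shrinking $U$ I may assume it is simply connected with $\overline U\subset B^\circ$, that $M$ meets $X_U$ in finitely many graphs of holomorphic sections $s_i$, and that a Betti datum is fixed. Since $f^n$ acts on $X_U$ by $(u,x)\mapsto(u,x+n\,t_f(u))$ in Betti coordinates, the piece $f^n(M)\cap X_U$ is (a finite union of translates of) the graph $\{(u,\,n\,t_f(u)+\text{const})\}$; this is a $g$-dimensional real-analytic submanifold of $U\times\R^{2g}/\Z^{2g}$.

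\emph{Step 1 (the volume as a pull-back integral).} I would pull $\kappa^g$ back by $\Phi^{-1}$ to get a smooth, nowhere-degenerate $(g,g)$-form on $U\times\R^{2g}/\Z^{2g}$ (nowhere-degenerate because each fiber $\{u\}\times\R^{2g}/\Z^{2g}$ carries the positive-definite complex structure $\jj(u)$ and $\kappa$ is Kähler), and then restrict it to the graph of $u\mapsto n\,t_f(u)$. Parametrizing that graph by $u\in U$, the restriction is a top-degree form on $U$ whose density, expanded in the coordinates $(du,\,dx)$, is a polynomial in the entries of $n\,Dt_f(u)$; the top-degree term in $n$ is $n^{2g}$ times the Jacobian-type expression built from $Dt_f(u)$ wedged against itself $g$ times, integrated against the fiberwise volume form. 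Hence $\vol_\kappa(f^n(M)\cap X_U)=a_{2g}\,n^{2g}+O(n^{2g-1})$ with $a_{2g}=c\int_U \mathrm{Jac}(t_f)\,$ (up to the finite multiplicity $\deg M$), where $\mathrm{Jac}(t_f)(u)$ is, up to a positive constant, $|\det(Dt_f(u))|$ expressed through $\jj(u)$ — more precisely the value of the fiber $(g,g)$-form on $\bigwedge^g$ of the image of $Dt_f(u)$. The key point is that, by the complex-linearity relation $(Dt_f)_u\circ\jj_U=\jj(u)\circ(Dt_f)_u$ established in Section~\ref{par:translation_vectors}, this top coefficient is a nonnegative integrand which is strictly positive exactly where $Dt_f(u)$ has full rank $2g$. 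Therefore $a_{2g}>0$ iff $t_f$ has maximal variation somewhere on $U$, which by Lemma~\ref{lem:open_mapping_translation}(3) is equivalent to maximal variation; this gives (a)$\Leftrightarrow$(b), and also the a priori bound $O(n^{2g})$.

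\emph{Step 2 (relating the volume to the norm of $(f^n)^*\kappa^g$).} For the equivalence with (c), I would note $(f^n)^*\kappa^g$ is a section of $\wedge^{g,g}T^*X$ over $X_U$ whose restriction to each fiber $X_b$ is $n^{2g}$ times a fixed (up to bounded factors) positive $(g,g)$-form plus lower-order terms in $n$, by the same computation as in Step~1 applied fiberwise: on $X_b$, $f^n$ is the translation by $n\,t_f(b)$ composed with $f^n|_{X_b}$ acting through a bounded family of linear maps, so $\norm{(f^n)^*\kappa^g}_{X_b}$ grows like $n^{2g}\cdot\|Dt_f(b)^{\wedge g}\|$ up to constants uniform over $\overline U$. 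Taking the sup over $b\in U$, $\norm{(f^n)^*\kappa^g}_{X_U}=O(n^{2g-1})$ iff $Dt_f(b)$ is degenerate (rank $\le 2g-2$, by evenness) for every $b\in U$, i.e.\ iff $t_f$ does not have maximal variation. This closes the cycle (a)$\Leftrightarrow$(b)$\Leftrightarrow$(c). Alternatively one integrates: $\vol_\kappa(f^n(M)\cap X_U)\le C\,\norm{(f^n)^*\kappa^g}_{X_U}$ trivially, and a lower bound of the same order comes from the coarea/transversality argument of Step~1, so the two quantities have the same polynomial degree.

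\emph{Main obstacle.} The delicate point is bookkeeping in Step~1: one must verify that the $n^{2g}$-coefficient of the pulled-back restricted form is genuinely the fiberwise volume of the parallelepiped spanned by $Dt_f(u)(T_uU)$ — i.e.\ that no lower-order wedge contributes at top order and that the coefficient is manifestly $\ge 0$ — and then identify its vanishing locus with the non-maximal-rank locus of $Dt_f$. This hinges on two facts already available: that the fibers of the Betti projection are complex submanifolds (so $\Phi^{-1,*}\kappa^g$ is fiberwise a positive volume form) and the complex-linearity of $Dt_f$ with respect to $(\jj_U,\jj(u))$ from Section~\ref{par:translation_vectors}, which forces the rank of $Dt_f$ to be even and makes the top coefficient a sum of squares. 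The uniformity over $\overline U\subset B^\circ$ (needed so the $O(\cdot)$ constants are independent of $n$) is routine once $\overline U$ is compact and avoids the singular fibers.
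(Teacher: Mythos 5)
Your proposal is correct and follows essentially the same route as the paper: both pass to Betti coordinates over a relatively compact $U\subset B^\circ$, reduce the multisection to finitely many local sections, parametrize $f^n(M)$ as a graph over $U$, and read the growth rate off the $n^{2g}$-coefficient of the resulting polynomial in $n$, which is (up to a positive factor) $\det((Dt_f)_u)$ and is generically nonzero exactly when $t_f$ has maximal variation, with the norm statement (c) handled by the same fiberwise computation. The only cosmetic differences are that the paper first replaces the transported metric by the Euclidean one via a uniform comparison constant and computes the wedge products $w_1(n;u)\wedge\cdots\wedge w_{2g}(n;u)$ explicitly rather than pulling back $\kappa^g$, and that the branches of $M$ are graphs of non-constant analytic maps $u\mapsto m^\Phi(u)$ rather than constants — a harmless imprecision in your write-up since these contribute only to lower-order terms.
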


Instead of $O(n^{2g-1})$, we could obtain $O(n^{r})$ where $r$ is the generic rank of $t_f$ on $U$, but the weakest estimate will be sufficient for our purpose. This lemma implies again that maximal variation of the translation vector is an intrinsic property that does not depend on the choice of Betti coordinates. 

\begin{proof} 
By compactness of $\overline{U}\subset B^\circ$, we reduce to the case when $U$ is a ball (viewed in some local chart of $B^\circ$ containing $\overline U$,  $U$ is a ball in $\C^{g}$).

As in Section~\ref{par:translation_vectors}, $s\colon U\to X$ is a section of $p_f$ above $U$,  $\Phi$ is a Betti diffeomorphism and $t_f$ is the translation vector in the Betti coordinates; we set $S=s(U)$.
We can assume moreover that $s$ (resp.\ $\Phi$) extends to a neighborhood of $\overline U$ (resp. $p_f^{-1}(\overline U)$).  
We transport the riemannian metric associated to $\kappa$ by $\Phi$ to get a riemannian metric $\norm{\cdot}_{\kappa, \Phi}$ on $U\times \R^{2g}/\Z^{2g}$. Let $\norm{\cdot}_{euc}$ be the euclidean metric on $\C^{g}\times \R^{2g}$, restricted to $U\times \R^{2g}$.
Since $\overline{U}\subset B^\circ$, there is a  constant $A\geq 1$ such that  $A^{-1} \norm{\cdot}_{euc} \leq \norm{\cdot}_{\kappa, \Phi}\leq A \norm{\cdot}_{euc}$ uniformly on the tangent space of $U\times \R^{2g}/\Z^{2g}$. Thus, when estimating volumes, we can work with the usual euclidean metric in the Betti coordinates.

Let $d$ be the degree of the multisection $M$. 
Let $C\subset U$ be the branch locus of $p_{f\vert M}\colon M\cap p_f^{-1}(U)\to U$. 
Let $D$ be a real analytic subset of $U$ containing $C$ such that $U':=U\setminus D$ is simply connected. Since the Lebesgue measure of $D$ vanishes, the volume of $f^n(M)$ above $U'$ is the same as its volume above $U$.
But over $U'$, $M$  is a union of $d$ sections, so without loss of generality we may replace $M$ by one of them and assume that $M$ is in fact a section.  

In the Betti coordinates, $f$ becomes 
\begin{equation}
f_\Phi\colon (u,x)\mapsto (u, x+t_f(u))
\end{equation} 
$\Phi(S)$ is parametrized  by $u\mapsto (u,0)$, and $\Phi(M)$ by $u\mapsto (u,m^\Phi(u))$ for some real analytic function $m^\Phi$. Thus, $\Phi(f^n(M))$ is parametrized by 
\begin{equation}
u\mapsto (u, m^\Phi(u)+nt_f(u)),
\end{equation}
and the question is to estimate the volume of this submanifold of real dimension $2g$ with respect to the euclidean metric. 


In the tangent space of  $U\subset \C^g=\R^{2g}$, denote by $(v_i)_{i\leq 2g}$ the standard orthonormal basis. In the tangent space of $\R^{2g}/\Z^{2g}$,  denote by
$(e_i)_{i\leq 2g}$ the standard orthonormal  basis. The image of $v_i$ by the differential of $u\mapsto (u, m^\Phi(u)+nt_f(u))$ is the vector 
\begin{equation}
w_i(n;u)=v_i+ (Dm^\Phi)_u(v_i)+ n (Dt_f)_u(v_i).
\end{equation} 
Let us write $t_f(u)=(t_1(u), \ldots, t_{2g}(u))$ and 
$(Dt_f)_u(v_i)=(\partial_i t_j(u))$; similarly, $m^\phi(u)=(m^\Phi_1(u), \ldots, m^\Phi_{2g}(u))$ and  
$(Dm^\phi)_u(v_i)=(\partial_i m^\phi_j(u))$. We see that   the exterior product $w_1(n; u)\wedge \cdots \wedge w_{2g}(n; u)$ is a polynomial in $n$ of degree at most $2g$ with coefficients which are uniformly bounded, analytic functions of~$u$. For instance, when $g=1$ we obtain 
\begin{align*}
w_1  = \; & v_1+ \partial_1 m^\Phi_1 e_1+ \partial_1 m^\Phi_2 e_2+ n\partial_1t_1 e_1+n\partial_1 t_2 e_2 \\
w_2  =\;  & v_2+   \partial_2 m^\Phi_1 e_1+ \partial_2 m^\Phi_2 e_2+     n\partial_2t_1 e_1+n\partial_2 t_2 e_2 
\end{align*}
and then setting $v_1'=v_1+ \partial_1 m^\Phi_1 e_1+ \partial_1 m^\Phi_2 e_2$ and $v_2'=v_2+   \partial_2 m^\Phi_1 e_1+ \partial_2 m^\Phi_2 e_2$ we obtain
\begin{align*}
w_1\wedge w_2  = \;  & v_1'\wedge v_2'\\ 
& +n (\partial_2t_1 v_1'\wedge e_1 + \partial_2 t_2 v_1'\wedge e_2 - \partial_1t_1 v_2'\wedge e_1 -  \partial_1 t_2 v_2'\wedge e_2) \\
& + n^2(\partial_1t_1\partial_2t_2-\partial_1t_2\partial_2t_1)e_1\wedge e_2
\end{align*}
where the dependence on $u$ is implicit. 
The 
monomial $n^{2g}$ appears only in front of $e_1\wedge \cdots \wedge e_{2g}$, and  is multiplied by the function $u\mapsto \det((Dt_f)_u)$. Since the euclidean volume is bounded from above by the integral of the function $u\mapsto \norm{w_1(n;u)\wedge \cdots \wedge w_{2g}(n;u)}$ with respect to the Lebesgue measure on $U\subset \R^{2g}$, this proves the first equivalence stated in the lemma. 

The computation for $\norm{(f^n)^*\kappa^g}_{X_U}$ is similar.
\end{proof}

In the next Section, we extend this type of estimate from compact subset $\overline{U}$ of $B^\circ$ to the whole base  $B$ itself. That is, we shall estimate the volumes of $f^n(S)\subset X$, where $S$ is a multisection.

\section{Propagation of volume estimates} 

 \vspace{0.2cm}
\begin{center}
\begin{minipage}{12cm}
{\sl{ 
We show that if the translation vector of $f$ does not have maximal variation, then 
$\norm{(f^n)^*}_{H^{g,g}(X;\R)}=O(n^{2(g-1)})
$. }}
\end{minipage}
\end{center}
\vspace{0.2cm}

Our first goal is the following proposition. 

\begin{pro}\label{pro:global_volume_estimate}
If the variations of $t_f$ are not maximal then, for any multisection $S$ of $p_f\colon X\to B$, there is 
an integer $D\geq 2$ such that 
$$\vol_\kappa(f^{D^n} (S))=O(D^{2n(g-1)})$$
as $n$ goes to $+\infty$. 
\end{pro}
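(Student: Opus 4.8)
The plan is to bootstrap the local estimate of Lemma~\ref{par:local_volume_estimate} to a global one by exploiting the semi-ampleness of $L_f$ and the fact that $p_f\circ f = p_f$. The first step is to fix a multisection $S$ of $p_f$ and a Kähler class $[\kappa]$, and to cover the base $B$ by a finite family of the form $B = \left(\bigcup_{i} U_i\right) \cup N$, where each $\overline{U_i} \subset B^\circ$ is a compact set to which Lemma~\ref{par:local_volume_estimate} applies, and $N$ is a neighborhood of the ``bad'' locus $B \setminus B^\circ$ (the discriminant, plus the singularities of $B$). Over each $U_i$, Lemma~\ref{par:local_volume_estimate} together with the failure of maximal variation gives $\vol_\kappa(f^n(S) \cap X_{U_i}) = O(n^{2g-1})$. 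So the only contribution to control is $\vol_\kappa(f^n(S) \cap X_N)$, and the whole difficulty is concentrated there, since $N$ meets the singular fibers where the translation picture degenerates and volumes can a priori grow like the full $n^{2g}$.

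The second step, which is the heart of the argument, is to control the volume near the bad fibers by a cohomological/localization trick using the fibration structure. Since $L_f$ is nef and $q_X([L_f],[L_f])=0$ with $L_f^{\otimes m}$ base-point-free defining $p_f$, the class $h = [p_f^*\mathcal O_B(1)]$ (or $c_1(L_f)$ up to scaling) is $f^*$-invariant: $f^*h = h$. The idea is to compare $[\kappa]$ with $h$: on a neighborhood of a bad fiber, $[\kappa]$ and $[\kappa] + t\,h$ differ by the pullback of a class on $B$, and one can arrange that $f^n(S) \cap X_N$ is swept out in a way that the $h$-direction contributes no growth (because $f$ acts trivially on $B$ and $h$ is pulled back from $B$). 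More concretely, I would estimate $\vol_\kappa(f^n(S) \cap X_N) = ([f^n(S)] \cdot [\kappa]^g \cdot [X_N\text{-cutoff}])$ by replacing, fiberwise, $[\kappa]$ by $[\kappa_{\text{fib}}] + [\text{horizontal}]$; the horizontal part is $f$-invariant, so only the fiber directions of $[\kappa]$ see the iteration, and there are only $g$ fiber directions, which already caps the growth at $n^{2g}$. To beat $n^{2g-1}$, one then argues that the non-maximality of the variation forces a degeneracy of the $f$-action in the fiber direction that persists near the bad fibers — this is where I expect to invoke that the translation vector extends (as a multivalued or bounded object) across $B \setminus B^\circ$, or to use a normalization/resolution $\tilde B \to B$ and a semistable reduction of the family so that the local analysis of Lemma~\ref{par:local_volume_estimate} applies on a model where the fibers are nicer.

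The third step is to pass from $n$ to the subsequence $D^n$. The point of allowing an arbitrary integer $D \geq 2$ (rather than proving the estimate for all $n$) is presumably to absorb the geometry of the bad locus: one picks $D$ so that $f^D$ has good properties relative to the chosen cover and the semistable model (e.g.\ $f^D$ fixes the components of the bad fibers, or the monodromy of $f^D$ around the discriminant is unipotent in a controlled way), and then iterates the local estimates $n$ times. Concretely, if one shows $\vol_\kappa(f^{D}(W)) \leq C\cdot D^{2(g-1)}\cdot\vol_\kappa(W) + (\text{lower order})$ for all multisections $W$ in a suitable class, a telescoping/induction on $n$ yields $\vol_\kappa(f^{D^n}(S)) = O(D^{2n(g-1)})$. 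The self-similar structure of the iteration under $f^D$ is exactly what makes the geometric-series bookkeeping close up.

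The main obstacle, as indicated, is the analysis near $B\setminus B^\circ$: Lemma~\ref{par:local_volume_estimate} is purely a statement over compact subsets of the smooth locus, and the singular fibers (together with possible singularities of $B$ itself) are precisely where a multisection can accumulate volume. I expect the resolution to combine three ingredients: (i) the $f^*$-invariance of the horizontal class $h$ to kill growth in the base directions, (ii) a semistable/equivariant reduction so that the bad fibers become normal crossings and the translation vector acquires controlled (at worst logarithmic) behavior, and (iii) the hypothesis of non-maximal variation, which provides the crucial gain of one power of $n$ in the fiber directions and must be shown to survive the limit to the bad fibers. Getting (iii) to propagate across the discriminant — rather than just holding generically — is the delicate point, and the choice of the integer $D$ is the technical device that makes it work.
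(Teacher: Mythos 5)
There is a genuine gap, and it sits exactly where you locate the difficulty: the passage from the local estimate of Lemma~\ref{par:local_volume_estimate} over compacts of $B^\circ$ to a global bound that survives near $B\setminus B^\circ$. None of the three mechanisms you propose for this step actually closes it. The decomposition of $[\kappa]$ into ``fiber plus horizontal'' directions with $f^*h=h$ only recovers the trivial cap $O(n^{2g})$, as you note, and gives no way to transport the non-maximality of $t_f$ (a statement about the generic rank of a map defined only over $B^\circ$) across the discriminant; a semistable reduction is invoked but no argument is given for why the translation vector stays controlled on such a model. Most concretely, your telescoping step does not compose: a bound of the form $\vol_\kappa(f^{D}(W))\leq C\,D^{2(g-1)}\vol_\kappa(W)+(\text{lower order})$ iterated $n$ times controls $(f^D)^n=f^{Dn}$, not $f^{D^n}$, so the induction as written proves nothing about the stated subsequence. (It is also not a bound one can hope to prove for the automorphism $f$ itself, whose single iterate distorts volumes by a constant, not by a factor $D^{2(g-1)}$.)

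The paper's route supplies precisely the two ingredients you are missing. First, the self-similar structure in the exponent is realized by an actual operator: the fiberwise multiplication map $m_D\colon X\dasharrow X$ (with $d\mid D-1$, $d$ the degree of a reference multisection $S_0$), for which the Betti-coordinate identity $f^{D^k}(S_{0})=m_D^k\bigl(f(S_{0})\bigr)$ holds over $B^\circ$ --- this, and not any monodromy property of $f^D$, is why the conclusion is stated along the subsequence $D^n$. Second, $m_D$ is a \emph{polarized} family of endomorphisms ($m_D^*A_b=A_b^{\otimes D^2}$), so the Gauthier--Vigny machinery applies: the fibered Green current $\hat T_{m_D}$ with continuous local potentials makes the intersection number $\int S\wedge\hat T_{m_D}^{\,g}$ well defined, and their Proposition~3.3 converts the \emph{local} vanishing $\vol(m_D^k(S_U))=o(D^{2gk})$ (which is what Lemma~\ref{par:local_volume_estimate} plus non-maximal variation gives) into the \emph{global} cohomological bound $\parallel[(m_D)^k_*S]\parallel=O(D^{(2g-2)k})$. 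That pluripotential-theoretic equivalence is the substitute for your missing analysis near the bad fibers, and it has no analogue for $f$ itself since $f$ has topological degree one. Finally, general multisections are reduced to $S_0$ via the addition map $A(x,y,z)=x+(y-z)$ on $X\times_B X\times_B X$, using only that $A$ acts linearly on cohomology; if no section exists one first base-changes along $S_0\to B$. You would need to rebuild all of this (or an equivalent) to make your outline into a proof.
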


This will be achieved in Section~\ref{par:proof_proposition_volume_estimate}. Then in Section~\ref{par:cohomology_estimate} we transfer this volume estimate into the upper bound 
$\norm{(f^n)^*}_{H^{g,g}(X;\R)}=O(n^{2g-1})$. 

The difficulty is to propagate the volume estimate from Lemma~\ref{par:local_volume_estimate} up to neighborhoods of the singular fibers of $p_f$ because when approaching these fibers, the Betti coordinates may explode. To do this, we rely on pluripotential theory and use a technique that has been developed by Gauthier and Vigny. 
To refer directly to their work, we translate our problem into a dynamical property of a new (non-invertible, rational) transformation of $X$.

\subsection{Multiplication by $D$ along the fibers} 

\subsubsection{} Let $p\colon X\to B$ be a fibration of a complex projective variety, the generic fiber of which is isomorphic to an abelian variety of dimension $g$. Let $S_0$ be a multisection of $p$ of degree $d=([S_0]\cdot [X_b])$, as in Section~\ref{par:setting_section_betti}.

 Let $D\geq 2$ be an integer such that $d$ divides $D-1$. Then, there is a well defined dominant, rational transformation $m_D\colon X\dasharrow X$ acting by multiplication by $D$ along the smooth fibers of $p$. More precisely, pick a  point $b\in B^\circ$ and a base point $w$ on the fiber $X_b$. Using $w$ as neutral element, $X_b$ becomes a commutative group isomorphic to $\C^{g}/\Lat(b)$ for some lattice $\Lat(b)$.
Using the group law, 
the transformation $m_D$ can be defined fiberwise by
 \begin{equation}
 z\in X_b\mapsto m_D(z)=Dz-\frac{D-1}{d}\sum_{s\in S_0\cap X_b} s,
 \end{equation} 
where the  points of $S_0\cap X_b$ are eventually repeated according to their multiplicities.
This does not depend on 
$w$ by the following standard lemma, the proof of which is straightforward.

\begin{lem} Let $a_i$, $i=1,\dots, l$ be integers such that $\sum_ia_i=1$, and $P_i$, $i=1,\dots, l$ be the points on a complex torus. Then $\sum_i a_iP_i$ does not depend on the choice of the neutral element $w$.  
\end{lem}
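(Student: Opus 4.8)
The plan is to reduce the claim to an elementary fact about group laws on complex tori, namely that an affine combination $\sum_i a_i P_i$ with $\sum_i a_i = 1$ is canonical. First I would make explicit the dependence on the neutral element. Fixing one choice $w$ gives an isomorphism $X_b \simeq \C^g/\Lat(b)$; writing $P_i$ for the image of $P_i$ under this isomorphism, the sum $\sum_i a_i P_i$ computed with $w$ as neutral element is simply $\sum_i a_i P_i \in \C^g/\Lat(b)$, the ordinary $\Z$-linear combination of the representatives. Changing the neutral element to another point $w'$ replaces the group law by its translate: if $w'$ corresponds to $c \in \C^g/\Lat(b)$, then the new addition $\oplus$ is given by $x \oplus y = x + y - c$, and more generally the new scalar multiple of $P$ by an integer $a$ is $aP - (a-1)c$. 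This is the content of the displayed fiberwise formula for $m_D$ as well, so it is worth recording.

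The key computation is then: with neutral element $w'$, one has
\begin{equation}
\bigoplus_i a_i P_i = \sum_i \left( a_i P_i - (a_i - 1)c \right) = \sum_i a_i P_i - \left(\sum_i a_i - l\right) c,
\end{equation}
but wait — the bracketed correction is not simply $(a_i-1)c$ when one iterates additions, so I would instead argue directly by induction on $l$ using the two-term rule $x \oplus y = x + y - c$, which gives $\bigoplus_{i=1}^l P_i = \sum_{i=1}^l P_i - (l-1)c$ for any $l$ points, and then for the weighted sum $\bigoplus_i a_i P_i = \sum_i a_i P_i - (\sum_i a_i - 1)c$. Since $\sum_i a_i = 1$ by hypothesis, the correction term $(\sum_i a_i - 1)c$ vanishes, so $\bigoplus_i a_i P_i = \sum_i a_i P_i$ is independent of the choice of $w'$. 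I would phrase the induction cleanly: the base case $l=1$ is trivial, and the inductive step uses $\bigoplus_{i=1}^{l} a_i P_i = \left(\bigoplus_{i=1}^{l-1} a_i P_i\right) \oplus (a_l P_l)$ together with the translation formula for $\oplus$ and for integer scalar multiplication in the translated group law.

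There is essentially no obstacle here — the statement is genuinely "straightforward" as the authors say — but the one point requiring a little care is bookkeeping the accumulation of the shift $c$ across repeated applications of the two-term addition rule, i.e.\ getting the coefficient of $c$ to come out as exactly $\sum_i a_i - 1$ rather than something like $l-1$ or $\sum_i(a_i-1)$. Organizing the proof so that one first establishes the unweighted identity $\bigoplus_{i=1}^l Q_i = \sum_{i=1}^l Q_i - (l-1)c$ for arbitrary torus points $Q_i$ (pure induction on $l$), and only afterwards substitutes $Q_i$ ranging over the $a_i$-fold repetition of $P_i$ so that $l$ becomes $\sum_i a_i$, makes the cancellation transparent and avoids any miscount.
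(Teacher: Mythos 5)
Your argument is correct, and it is worth noting that the paper itself gives no proof of this lemma (it is dismissed as ``straightforward''), so there is no authorial route to compare against; your write-up simply supplies the expected computation. The core identities are right: with neutral element $w'$ corresponding to $c$, the translated law is $x\oplus y=x+y-c$, integer scaling becomes $a\cdot_{\oplus}P=aP-(a-1)c$ (valid for all $a\in\Z$, including $a\le 0$, as one checks from the $\oplus$-inverse $2c-x$), the unweighted sum satisfies $\bigoplus_{i=1}^{l}Q_i=\sum_i Q_i-(l-1)c$, and combining these gives the total correction $-\bigl(\sum_i a_i-1\bigr)c$, which vanishes under the hypothesis $\sum_i a_i=1$. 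Two small points. First, your initial displayed equation is, as you yourself flag, not the actual computation (it omits the $-(l-1)c$ contributed by the outer $\oplus$-sum); in a final version you should delete it rather than leave the ``but wait'' self-correction in place. Second, the reformulation via ``$a_i$-fold repetition of $P_i$'' only literally makes sense for $a_i\ge 0$; for negative coefficients you must repeat the $\oplus$-inverse $2c-P_i$, and the bookkeeping still closes up (one finds the coefficient of $c$ equals $1-\sum_i a_i$), but this deserves a sentence. Finally, there is a one-line alternative that avoids all bookkeeping: changing the neutral element from $w$ to $w'$ amounts to the affine coordinate change $x\mapsto x-c$ on $\C^g/\Lat(b)$, and an affine combination with $\sum_i a_i=1$ is translation-equivariant, since $\sum_i a_i(P_i-c)+c=\sum_i a_iP_i-\bigl(\sum_i a_i\bigr)c+c=\sum_i a_iP_i$. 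Either way the lemma holds.
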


Hence it defines a rational transformation of $X$, regular above $B^\circ$, preserving $p_f$, and of topological degree~$D^{2g}$.
In Betti coordinates, above some open subset $U\subset B^\circ$, $m_D$ becomes 
\begin{equation}
m_{D,\Phi}(u,x)=(u, Dx+ t_D(x))
\end{equation} 
for some real analytic map $t_D\colon U\to \R^{2g}$. 

\subsubsection{}\label{par:base-change} If $p\colon X\to B$ is a lagrangian fibration of a hyperkähler manifold, the natural polarization of the fibers introduced in Section~\ref{par:lagrangian-fibration} is automatically invariant under $m_D$; that is,  
\begin{enumerate}[\rm (1)]
\item for $b$ in $B^\circ$, the image of the restriction of $ H^2(X;\Z)$ to $ H^2(X_b;\Z)$ is an infinite cyclic subgroup 
$R_b\subset H^2(X_b;\Z)$;
\item there
is an ample line bundle $A$ on $X$ such that (a suitable multiple of) the ample generator of $R_b$ is the Chern class of $A_b:=A_{\vert X_b}$ and 
$m_D^*A_b=A_b^{\otimes D^2}$. 
\end{enumerate}
Thus, $m_D$ is a family of polarized endomorphisms of $X\to B$ in the sense used by Gauthier and Vigny in~\cite{GV}.  

To prove (2), let $H_b$ be the ample generator of $R_b$. The inverse image by $m_D$ multiplies $H_b$ by $D^2$. If we pick any line bundle $L_b$ in the class $H_b$, then $m_D^*L_b=L_b^{\otimes D^2}\otimes M_b$ where $M_b\in Pic^{\circ}(X_b)$. To find a line bundle in the class of $H_b$, which is taken to the power $D^2$ by $m_D$, 
we have to add to $L_b$ a $(D^2-1)$-th root of $M_b$. There are $D^2-1$ of them, so equally $D^2-1$ line bundles $L_{b,j}$ in the class $H_b$, and we take the sum of them all to get a monodromy invariant $A_b$.

Now, suppose we start with a lagrangian fibration and a multisection $S_0$, and we do the base change given by $p\colon S_0\to B$. We  get a new variety $Y$, a map $q\colon Y\to X$ of degree $d$, and a new fibration $p_Y\colon Y\to S_0$ such that  $p\circ q=p_{\vert S_0}\circ p_Y$. Moreover, $p_Y$ has a natural section $S_0^Y\subset Y$. Then, for each $D\geq 2$ we can construct a rational transformation $m_D^Y\colon Y\dasharrow Y$ acting  by multiplication by $D$ on the smooth fibers of $p_Y$ and fixing $S_0^Y$ pointwise (we use $S_0^Y\cap Y_b$ as the neutral element of  $Y_b:=p_Y^{-1}(b)$ for $b\in S_0^\circ$). The natural polarization $R_b$ of the fibers of $X_b$ can be pulled back to $Y$ and it gives an $m_D^Y$-invariant polarization. 

\subsection{Local to global volume estimates}\label{par:GV} Let us summarize some of the results of~\cite{GV}. We fix a fibration $p_Y\colon Y\to B_Y$ and a rational transformation $g\colon Y\to Y$.
We also fix a Kähler form $\kappa_B$ on $B_Y$.
 We assume that $p_Y\circ g= p_Y$, that $g$ is regular over some dense open subset of $B_Y$, and that, as above, $g$ is (relatively) polarized. This assumption is equivalent to the existence of  an ample line bundle $A$ on $Y$ and an integer $D(g)\geq 2$ such that 
\begin{equation}\label{eq:polarized}
g^*A_{b}=A_{b}^{D(g)}
\end{equation} 
for all $b$ in a dense open subset of $B_Y$. Choose a Kähler form $\kappa$ representing the Chern class of $A$ and a dense open subset $B_Y'$ of $B_Y^\circ$ over which $g$ is regular and satisfies~\eqref{eq:polarized} ($B_Y'$ is a {\emph{regular part}} in the sense of~\cite{GV}). Set $Y'=p_Y^{-1}(B_Y')$. Then, on $Y'$, there is a closed positive current $\hat{T}_g$, of type $(1,1)$, such that 
\begin{equation}\label{eq:fiber_current}
\frac{1}{D(g)^k}(g^k)^*\kappa\to \hat{T}_g
\end{equation}
in the sense of weak convergence for currents. Moreover, $\hat{T}_g$ has local, continuous potentials (on the open set $Y'$). For this, we refer to Section~2.3 of~\cite{GV}~(\footnote{The construction in~\cite{GV} differs slightly from what we write. They fix an equivariant embedding $\iota\colon Y\to B_Y\times \P^N$ such that $\pi=\pi_B\circ \iota$ where $\pi_B$ is the first projection $B_Y\times Y\to B_Y$. Then, they replace the Kähler form $\kappa$ by $\kappa_{FS}$, the restriction of the Fubini-Study form to $\iota(Y)$. So, their form is not Kähler, but in the limit process~\eqref{eq:fiber_current} we obtain the same current.   }).

Then, Proposition~3.3 in \cite{GV} shows that the following properties are equivalent. Let $S$ be a multisection of $p_Y$ and let $\{S\}$ denote the current of integration on~$S$. Let $b_Y$ denote the dimension of $B_Y$, hence also the dimension of $S$. The following properties are equivalent 
\begin{enumerate}[\rm (a)]
\item locally above $B_Y'$ the volume of $g^k(S)$ does not grow as fast as $D(g)^{b_Yk}$. This means that for any open subset $U\subset B_Y$ such that $\overline{U}\subset B_Y'$ we have 
$$
\vol(g^k(S_U))=o(D(g)^{b_Yk})
$$
or equivalently
$$
\int_{g^k(S_U)}\kappa^{b_Y}=o(D(g)^{b_Yk})
$$
as $k$ goes to $+\infty$;
\item the intersection of $\hat{T}_g^{b_Y}$ with $S$ over $B_Y'$ vanishes, i.e. 
$$
\int_{Y'} S\wedge  \hat{T}_g^{b_Y}=0;
$$
\item the global volume of the strict transform $g^k_*(S)$ grows as most as  $D(g)^{(b_Y-1)k}$, i.e.\  
$$ 
\vol(g^K_*(S))=O(D(g)^{(b_Y-1)k})
$$ 
as $k$ goes to $+\infty$; equivalently
$$
\parallel [g^k_*(S)]\parallel =O(D(g)^{(b_Y-1)k}).
$$
\end{enumerate}
Here, $\kappa$ is any Kähler form on $Y$ and $[\cdot]$ denotes the class in $H^{2b_Y}(Y;\Z)$. 
The point is that the local Property~(a), in which the implicit constant in $o(\cdot)$ might depend on $U$, gives rise to the global estimates stated in Property~(c).

If we apply this result when $b_Y=g$, $\dim(Y)=2g$, the generic fiber of  $Y\to B_Y$  is abelian, and $g=m_D^Y$ acts by multiplication by $D$ along the fibers, then $D(g)=D^2$ and we derive that
$\vol(g^k(S_U))=o(D^{2gk})$ implies 
\begin{equation}
\parallel [(m_D^Y)^k_*(S)]\parallel =O(D^{(2g-2)k}).
\end{equation}

\subsection{Proof of Proposition~\ref{pro:global_volume_estimate}}\label{par:proof_proposition_volume_estimate}
Set $X^\circ=X_{B^\circ}$. 

\subsubsection{} First, we assume $p_f\colon X\to B$ has a section $s_0\colon B\dasharrow X$, the image of which is denoted by $S_0$. 

(a). {\emph{First Step}.--} We fix some integer $D\geq 2$ and denote by $m_D\colon X\dasharrow X$ the rational map fixing $S_0$ and acting by multiplication by $D$ along the fibers of $p_f$. Let $\Phi$ denote local Betti coordinates associated to an open set $U\subset B^\circ$, the section $S_0$, and some basis of $H_1(X_b;\Z)$, $b\in U$. Viewed in the Betti coordinates, the section $s_0$ corresponds to $s_{0,\Phi}(u)=0$, while $f$ and $m_D$ correspond to  
\begin{align}
f_\Phi(u,x) &= (u,x+t_f(u))\\
m_{D,\Phi}(u,x) &= (u, Dx).
\end{align} 
Thus, we have 
\begin{align}
f_{\Phi}^{D^k}\circ s_{0,\Phi}(u) &= D^kt_f(u)\\
&= m_{D,\Phi}^k\left( f_\Phi\circ s_{0,\Phi}(u) \right)
\end{align}
which means that above $U$ we have 
\begin{align}
f^{D^{k}}(S_{0,U}) = m_D^k\left( f(S_{0,U})\right).
\end{align}
From Lemma~\ref{par:local_volume_estimate} and Section~\ref{par:GV}, we deduce that 
\begin{align}
\parallel f^{D^k}(S_0)\parallel =O(D^{(2g-2)k})
\end{align}
where $\parallel \cdot \parallel$ is any norm on the vector space $H^{2g}(X;\R)$.

(b). {\emph{Second Step}.--} Consider the fiber product $W=X\times_B X \times_B X$; as a set, this algebraic variety is 
\begin{equation}
W=\{(x,y,z)\in X^3\; ; \; p_f(x)=p_f(y)=p_f(z)\}.
\end{equation}
It comes with a fibration $P_f\colon W\to B$, defined by $P_f(x,y,z)=p_f(x)$, and with a rational map $A\colon W\to X$, defined by
\begin{equation}\label{eq:addition-map}
A(x,y,z)=x+(y-z),
\end{equation}
using the group law along the fiber $X_b$, $b=P_f(x,y,z)$. (We do not need the existence of a section or a choice of neutral element in $X_b$ for this definition.)

Let $S$ be a multisection of $p_f$ and consider the sequence of multisections $T_k\subset W$ of $P_f$ defined by 
\begin{equation}
T_k=\{(x,y,z)\in W\; ; \; x\in S, \; y\in S_0, \; z=f^{D^k}(y)\}.
\end{equation}
Then, 
\begin{equation}
f^{D^{k}}(S)=A(T_k).
\end{equation}
From the first step, we know that the class  $[T_k]\in H^{2g}(W;\Z)$ satisfies 
\begin{equation}
\parallel [T_k] \parallel \leq C D^{(2g-2)k}
\end{equation}
for some constant $C>0$ that depends on $S$. At the level of (co)homology classes, $A$ acts as a linear map between finite dimensional spaces; thus, we obtain $\parallel [A(T_k)]\parallel \leq C'\parallel [T_k]\parallel$ for some constant $C'>0$ and 
\begin{equation}
\parallel [f^{D^{k}}(S)] \parallel \leq C'' D^{(2g-2)k}
\end{equation}
for $C''=C\cdot C'$. This concludes the proof.

\subsubsection{} In case $p_f\colon X \to B$ does not have a section, we take for $S_0$ a multisection and do the base change $p_f\colon S_0\to B$, as in Section~\ref{par:base-change}. This provides a new variety 
$Y\to S_0$ and a map $q\colon Y\to X$ above $S_0\to B$. 
Define $S_0'$ to be the locus of points $s\in S_0$ around which $p_F\colon S_0\to B$ is a local diffeomorphism and $p_f(x)\in B^\circ$. Then, if $V\subset S_0'$ is an open subset and $V$ is small enough, $q$ realizes a diffeomorphism from  $Y_V=p_Y^{-1}(V)$ to $p_f^{-1}(p_f(V))$. 
The automorphism $f$ induces a rational transformation $f_Y$ of $Y$ such that $f\circ q = q\circ f_Y$. For $D\geq 2$, we define $m_{D,Y}$ to be the multiplication by $D$ along the fibers of $p_Y$ fixing the natural section $S_0^Y$. 

Now, if $S$ is a multisection of $p_f$, we pull back it to $Y$ by $q$. This gives a multisection $S^Y$ of $p_Y$ for which  $\vol (f_Y^{D^k}(S^Y_U))=O(D^{(2g-1)k})$ as soon as $U\subset S_0'$, because $q$ realizes a local conjugation between $f_Y$ and $f$. Thus, we can repeat the argument from Step 1 above in $Y$. Then, we can repeat the argument from Step 2 by working on $W_Y=Y\times_{S_0} Y \times_{S_0} Y$ and composing the addition map $A\colon W_Y\to Y$ 
(defined as in Equation~\eqref{eq:addition-map}) with $q\colon Y\to X$. This concludes the proof in the general case. 

\subsection{A cohomological estimate}\label{par:cohomology_estimate} 

\begin{pro}\label{pro:upper_cohomological_estimate}
Let $f$ be a parabolic automorphism of a hyperkähler manifold $X$.
If the variations of the  translation vector of $f$ are not maximal, then 
$$
\norm{(f^n)^*}_{H^{g,g}(X;\R)}=O(n^{2(g-1)})
$$
as $n$ goes to $+\infty$.
\end{pro}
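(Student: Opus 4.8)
The plan is to deduce Proposition~\ref{pro:upper_cohomological_estimate} from Proposition~\ref{pro:global_volume_estimate} by a standard interpolation/subsequence argument. Suppose the translation vector of $f$ is not of maximal variation. Fix a Kähler form $\kappa$ on $X$ and a multisection $S$ of $p_f$ (which exists because the general fiber is projective, or after the base change of Section~\ref{par:base-change} in the non-projective case). Proposition~\ref{pro:global_volume_estimate} gives an integer $D\geq 2$ with $\vol_\kappa(f^{D^n}(S))=O(D^{2n(g-1)})$. Since the cohomology class of $f^{D^n}(S)$ in $H^{g,g}(X;\R)$ has norm comparable to this volume (its pairing with $[\kappa]^g$ equals the volume, and the class lies in the effective cone so its full norm is controlled by finitely many such pairings against a basis of Kähler classes, or by positivity one bounds $\norm{[f^{D^n}(S)]}$ by $C\,([f^{D^n}(S)]\cdot[\kappa]^g)$), we get $\norm{(f^{D^n})^*[S]}_{H^{g,g}}=O(D^{2n(g-1)})$.

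Next I would upgrade this from the single class $[S]$ to the operator norm of $(f^{D^n})^*$ on all of $H^{g,g}(X;\R)$, and then fill in the non-power-of-$D$ values of the exponent. For the first point, note that as $S$ ranges over multisections of $p_f$ of bounded degree the classes $[S]$ span (over $\R$, after normalization) a subspace of $H^{g,g}$; more efficiently, it suffices to control $(f^{D^n})^*$ on the classes $[\kappa]^g$, $[\kappa'\wedge\kappa^{g-1}]$, \dots\ obtained from a basis of $H^{1,1}$, because $H^{g,g}$ is generated by such products of $(1,1)$-classes on a hyperkähler manifold (this is the only place the specific structure matters, and if needed one invokes Verbitsky's theorem, already cited in the Strategy of proof), and each generator $[\kappa']^g$ is of the form $[S']$ up to scaling for a suitable multisection, hence bounded by the same argument. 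Thus $\norm{(f^{D^n})^*}_{H^{g,g}}=O(D^{2n(g-1)})$. For the second point, for general $m$ write $m=D^n+r$ with $0\leq r<D^n$; actually it is cleaner to argue that $\norm{(f^m)^*}_{H^{g,g}}$ is a sub-multiplicative-type quantity: $(f^m)^*=(f^{D^n})^*\circ (f^{r})^*$ and, since any $r<D^n$ can be reached from the previous benchmark $D^{n-1}$ in at most $D^n-D^{n-1}<D^n$ further steps, a crude bound $\norm{(f^j)^*}\leq \norm{f^*}^j$ is far too weak; instead I use that $m$ lies between $D^n$ and $D^{n+1}$, so $D^{2n(g-1)}\leq m^{2(g-1)}$, and bound $\norm{(f^m)^*}$ via $\norm{(f^{D^{n+1}})^*\circ (f^{D^{n+1}-m})^*}$—but $D^{n+1}-m$ can be as large as $D^{n+1}-D^n$, so this still needs care.

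The honest way to close the gap is the following: for a fixed Kähler class $[\kappa]$ the function $n\mapsto ([f^n(S)]\cdot [\kappa]^g)=\vol_\kappa(f^n(S))$ is a sum of a bounded number of terms of the form $P_i(n)\lambda_i^n$ coming from the eigenvalues $\lambda_i$ of $f^*$ on $H^{g,g}$ (all of modulus $\leq 1$ since $f$ is parabolic), with $P_i$ polynomials; the degree-$(2g-1)$ growth is then governed by the size of the largest Jordan block for eigenvalue $1$ in the appropriate invariant subspace. Knowing the growth is $O(D^{2n(g-1)})$ along the subsequence $n=D^k$ forces the Jordan blocks for unit-modulus eigenvalues (in the span of the relevant classes) to have size $\leq 2g-1$, hence $\norm{(f^n)^*}_{H^{g,g}}=O(n^{2g-2})$ along \emph{all} $n$. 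Running this for $[S]=[\kappa']^g$ over a spanning set of $(1,1)$-classes $[\kappa']$ gives the claim. I expect the main obstacle to be precisely this last passage—extracting full operator-norm control, and control for every $n$ rather than along a geometric subsequence, from the volume estimate—because it requires knowing that $H^{g,g}$ is spanned by $g$-th powers (or suitable products) of $(1,1)$-classes and that the Jordan structure is compatible with this spanning set; this is where Verbitsky's description of the cohomology ring enters, and where one must be careful that "not maximal variation" bounds the volume for \emph{every} multisection uniformly enough to feed into the linear-algebra argument.
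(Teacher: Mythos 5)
Your overall route --- the volume bound of Proposition~\ref{pro:global_volume_estimate} along the subsequence $n=D^k$, conversion into a bound on the class $[f^{D^k}(S)]$ via positivity, an upgrade to the operator norm on $H^{g,g}(X;\R)$, and finally a Jordan-block argument to pass from the geometric subsequence to all $n$ --- is the same as the paper's, and your first and last steps are sound: the norm of a class in the cone of positive $(g,g)$-currents is indeed comparable to its pairing with $[\kappa]^g$ because that cone is salient, and since the eigenvalues of $f^*$ on $H^{2g}(X;\Z)$ are roots of unity, exact polynomial growth of some degree $s$ together with the bound along $n=D^k$ forces $s\leq 2(g-1)$ for every $n$.

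The genuine gap is in the upgrade from the single class $[S]$ to the operator norm. You propose to span $H^{g,g}(X;\R)$ by classes $[\kappa']^g$ (equivalently, by classes of multisections) and to control $(f^n)^*$ on each spanning class. But Verbitsky's theorem only gives an injection $\Sym^g H^2(X;\R)\hookrightarrow H^{2g}(X;\R)$, not a surjection onto $H^{g,g}(X;\R)$; for most deformation types the cohomology is not generated by $H^2$, so the spanning claim is unjustified and in general false --- precisely the obstacle you flag yourself at the end. The paper sidesteps spanning entirely: it takes the cone $P^g(X)\subset H^{g,g}(X;\R)$ of classes of closed positive $(g,g)$-currents, which is closed, convex, salient, $f^*$-invariant, and contains $[S]=[\omega_{FS}^g]$ in its \emph{interior}, and invokes Birkhoff's version of the Perron--Frobenius theorem: for a linear map preserving such a cone, the operator norm of $(f^*)^n$ on the span of the cone is comparable, with a uniform constant, to $\norm{(f^*)^n[S]}$ for any single fixed interior point. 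One class therefore suffices, and no statement about generators of $H^{g,g}(X;\R)$ is needed; this same comparison is also what makes the spectral-radius and interpolation step clean. If you replace your spanning step by this cone argument, the rest of your proof goes through.
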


\begin{proof}
Embed $X$ into a projective space $\P^N$, and intersect it with a suitable linear subspace to get a multisection $S$ of $X$. The class $[S]$ of $S$ will be considered as an element of $H^{g,g}(X;\Z)$ (using Poincaré duality), it is the same as $\omega_{FS}^g$ where $\omega_{FS}$ denotes the restriction of the Fubini-Study form to $X$. Proposition~\ref{pro:global_volume_estimate} shows that 
\begin{equation}\label{eq:cohom_estimate_S}
\norm{(f^n)^*[S]}=O(n^{2(g-1)})
\end{equation} 
along the subsequence $n=D^k$, for some $D\geq 2$. 

Now, consider a real  subspace $W$ of $H^{2g}(X;\R)$ together with a closed, convex, and salient cone $C\subset W$, the interior of which is non-empty. Assume that (a) $W$ and  $C$ are $f^*$-invariant and  (b) $[S]$ is in the interior of $C$. Then, $\norm{(f^*)^n}_{W}=O(n^{2(g-1)})$. Indeed, by Birkhoff's version of the Perron-Frobenius theorem, we know that 
\begin{equation}
A^{-1} \norm{(f^*)^n[S]} \leq \norm{(f^*)^n}_W \leq A \norm{(f^*)^n[S]}
\end{equation}
for some constant $A\geq 1$. Thus, Equation~\eqref{eq:cohom_estimate_S} 
implies that the spectral radius of $f^*_W$ is equal to $1$. 
This, in turn, implies that $ \norm{(f^*)^n}_W$ grows like a power of $n$, and then Equation~\eqref{eq:cohom_estimate_S} shows that this power is $\leq 2(g-1)$.

We apply this scheme to the pair $P^g(X)\subset H^{p,p}(X;\R)$ where $P^g(X)$ is the 
cone of classes represented by closed positive currents of bidegree $(p,p)$ (see for instance ~\cite{demailly:book}). 
This is a closed, convex cone, and it is salient because the set of closed positive currents $T$ of bidegree $(g,g)$ with fixed mass $M_\kappa(T)=\langle T \vert \kappa^g\rangle$ is compact for the weak-$*$ topology. The class $[S]=[\omega_{FS}^g]$ is in the interior of this cone, because a small perturbation of $\omega^g$ is a positive $(g,g)$-form.  And $P^g(X)$ is $\Aut(X)$-invariant. This concludes the proof. 
\end{proof}

\section{Action on the cohomology} 


 \vspace{0.2cm}
\begin{center}
\begin{minipage}{12cm}
{\sl{
In this section, we show that if $f$ is a parabolic automorphism of a hyperkähler manifold, 
then  for every $1\leq p\leq g$ there is a positive constant $c_p(f)$ such that $\norm{(f^*)^n}_{H^{p,p}(X;\R)}\simeq c_p(f) n^{2p}$. }}
\end{minipage}
\end{center}
\vspace{0.2cm}

Let $f$ be an automorphism of a hyperkähler manifold. Recall from Section~\ref{par:loxo_para_elli} that $f$ is either elliptic, parabolic, or loxodromic. 
The following result is a concatenation of theorems of Lo Bianco, Oguiso, and Verbitsky. 

\vv

\begin{thm-C} 
Let $f$ be an automorphism of a hyperkähler manifold of dimension $2g$. 

\begin{enumerate}[\rm (1)]
\item If $f$ is elliptic, then $f^k=\Id_X$ for some $k\geq 1$;
\item if $f$ is parabolic, then for every $1\leq p\leq g$ there is a positive 
constant $c_p(f)$ such that 
$\norm{(f^*)^n}_{H^{p,p}(X;\R)}\simeq c_p(f) n^{2p};$
\item  if $f$ is loxodromic, there is a real number $\lambda(f)>1$ such that for every $1\leq p\leq g$
there is a positive constant $c_p(f)$ such that 
$\norm{(f^*)^n}_{H^{p,p}(X;\R)}\simeq c_p(f) \lambda(f)^{pn};$
\item by duality, if $f$ is parabolic then for $g\leq p\leq 2g$ there is a positive constant $c_p(f)$ with $\norm{(f^*)^n}_{H^{p,p}(X;\R)}\simeq c_p(f) n^{2(2g-p)}$, and similarly in the loxodromic case.
\end{enumerate}
\end{thm-C}

\vv

We only sketch the proof, because the only part that may be considered to be new is the second assertion.

To prove Assertion~(1), note that the connected component of the identity in $\Aut(X)$
is trivial (see~\cite{huybrechts:survey2003}). Hence, by Lieberman's theorem (see~\cite{Lieberman:1978}), an automorphism preserving a Kähler class has finite order. But if $f$ is elliptic, its eigenvalues on $H^{1,1}(X;\R)$, and then on 
$H^{2}(X;\R)$, all have modulus~$1$. Thus, being roots of the characteristic polynomial of $f^*_{H^2(X;\Z)}$, hence of a monic polynomial with integer coefficients, these eigenvalues must be roots of unity. This implies that a positive iterate of $f$ acts trivially on $H^2(X;\Z)$, and we conclude with Lieberman's theorem.

Now, suppose that $f$ is parabolic. Then $\norm{(f^*)^n}_{H^{1,1}(X;\R)}\simeq c_1(f) n^{2}$ for some $c_1(f)>0$ (see \S~\ref{appendix})
And a theorem of Verbitsky says that  $\Sym^p(H^2(X;\R))$ embeds into $H^{2p}(X;\R)$ for $p\leq g$ via the cup product, so 
\begin{equation}\label{eq:verbitsky_lower_estimate}
\norm{(f^*)^n}_{H^{p,p}(X;\R)}\geq  c n^{2p}
\end{equation}
for some positive constant $c$ and for all $n\geq 1$. On the other hand, from the Khovanski-Teyssier inequalities,  the numbers 
\begin{equation}
s_p(f)=\limsup_{n\to +\infty}\frac{\log(\norm{(f^*)^n}_{H^{p,p}(X;\R)})}{\log(n)}
\end{equation}
form a concave function of $p$, for $0\leq p \leq \dim(X)$ (see~\cite{lobianco:bsmf}); this means that
\begin{equation}
2s_p(f)\leq s_{p-1}(f)+s_{p+1}(f)
\end{equation} for every $p$. Since $s_0(f)=0$ and $s_1(f)=2$, it follows from the lower estimate~\eqref{eq:verbitsky_lower_estimate} that $s_p(f)=2p$ for every $p\leq g$.

When $f$ is loxodromic, we have $\norm{(f^*)^n}_{H^{1,1}(X;\R)}\simeq c_1(f) \lambda(f)^{n}$ for some $\lambda(f)>1$ and the same  argument leads to Assertion~(3) (for the details, see~\cite[Appendix]{lobianco:imrn} or \cite{oguiso}).

\section{Proof of theorem A and application} 

 \vspace{0.2cm}
\begin{center}
\begin{minipage}{12cm}
{\sl{
We prove Theorem~A when $X$ is assumed to be projective and give an application to the dynamics of some groups of automorphisms.
 }}
\end{minipage}
\end{center}
\vspace{0.2cm}

\subsection{Proof of Theorem~A}\label{par:proof-thmA-projective} Let $f$ be a parabolic automorphism of a hyperkähler manifold $X$, as in Theorem~A.
Assertion~(1) of Theorem~A is contained in Theorem~C.
In particular, $\norm{(f^*)^n}_{H^{g,g}(X;\R)}$ grows like $c_g(f)n^{2g}$. The same estimate holds for $f^k$, the first positive iterate acting as the identity on the base of the invariant lagrangian fibration $p_f$, with constant $c_g(f)$ replaced by $c_g(f^k)=c_g(f)k^{2g}$. 

Now, assume that $X$ is projective. By Proposition~\ref{pro:upper_cohomological_estimate}, the translation vector of $f^k$ has maximal variations, as stated in Assertion~ (2) of Theorem~A. 

Let us derive Assertion~(3) from Assertion~(2). This final step does not use that $X$ is projective, only the validity of Assertion~(2).  
Let $U\subset B^\circ$ be a small, relatively compact, simply connected open subset. Write $f^k$ on $X_U$ in some Betti coordinates : 
\begin{equation}
f^k_\Phi(u,x)=(u,x+t_{f^k}(u)).
\end{equation}
For $b\in U$, the closure $Z(b)$ of $\Z t_{f^k}(b)\in \R^{2g}/\Z^{2g}$ is a Lie subgroup of $\R^{2g}/\Z^{2g}$. 
Its dimension $r(b)$ and its number of connected components $c(b)$ vary with~$b$. 
Since $t_{f^k}$ has maximal variations, 
$t_{f^k}(U)$ contains an open set, so that any pair $(r,c)$ with $0\leq r \leq 2g$ and $c\in \Z_{\geq 1}$ can be realized as $(r(b),c(b))$ by some $b$ in $U$. This proves Assertion~(3) because orbits of $f^k$ in $X_b$ correspond, in the Betti coordinates $U\times \R^{2g}/\Z^{2g}$, to subsets of type $\{b\}\times (x+Z(b))$.

\subsection{Example} Consider a compact hyperkähler manifold $X$, of dimension $2g$, together with two parabolic automorphisms $f$ and $g$ such that the Lagrangian fibrations $p_f\colon X\to B_f$ and $p_g\colon X\to B_g$ are distinct. By this we mean that $p_f$ and $p_g$ satisfy the following equivalent properties: (a) the restriction of $p_f$ to a general fiber of $p_g$ is not constant;
(b) the restriction of $p_f$ to a general fiber of $p_g$ is a dominant morphism onto~$B_f$;
(c) the restriction of $p_g$ to a general fiber of $p_f$ is a dominant morphism onto~$B_g$.
The existence of two distinct Lagrangian fibrations like that implies that $X$ is projective.

For each pair of positive integers $(k,\ell)$, consider the subgroup $\Gamma_{k,\ell}$ of $\Aut(X)$ generated by $f^k$ and $g^l$. Fix a distance $\dist(\cdot)$ on $X$ and say that a subset $\Lambda\subset X$ is $\epsilon$-dense if every point of $X$ is at distance less than $\epsilon $ from a point of $\Lambda$. 
\begin{enumerate}[\rm (1)]
\item The set  of points $x\in X$ such that the orbit $\Gamma_{k,\ell}(x)$ is dense in $X$ (for the euclidean topology) and is a countable intersection of open dense subsets of $X$; in particular, it is dense and has full measure for the volume form $(\sigma\wedge{\overline{\sigma}})^g$.
\item For every $\epsilon>0$, there is an integer $N\geq 1$ such that for every pair $(k,\ell)$ with $k$ and $\ell\geq N$, the set of points $y\in X$ such that $\Gamma_{k,\ell}(y)$ is finite is an $\epsilon$-dense set.  
\end{enumerate} 
To get (1), do as in~\cite[\S 6]{AV} and~\cite{Cantat-Dujardin:Transformation-Groups}: by Theorem~A, the locus of points $b$ in $B_f$ such  that the orbit closures of $f$ in $X_b$ have codimension $\geq 1$ is a countable union of proper real analytic subsets of $B_f^\circ$. Pick a point 
$x\in X$ such that $p_f(x)$ is not in this meager set. The closure of $\Gamma_{k,\ell}(x)$ contains the fiber of $p_f$ through $x$, hence also the orbit of this fiber by $\Gamma_{k,\ell}$; then, using $g$, it contains all fibers of $p_g$ in the complement of a meager set. This shows that a generic orbit is dense. To conclude, note that being dense is the same as being $\epsilon$-dense for all $\epsilon>0$. (Also, one can notice that an orbit is dense for some $\Gamma_{k,\ell}$ if and only if it is dense for all $\Gamma_{m,n}$.)

To get Assertion~(2), we use the following consequence of Theorem~A: the set $F_k:=\{b\in B_f\; ; \; f^k_{\vert X_b}=\Id_{X_b}\}$ becomes $\epsilon$-dense in $B$ if $k$ is large enough. Similarly, the set $G_\ell\subset B_g$ corresponding to fibers of $p_g$ on which the order of $g$ divides $\ell$ is $\epsilon$-dense for large enough $\ell$. Then, the set 
\begin{equation}
p_f^{-1}(F_k)\cap p_g^{-1}(G_\ell)
\end{equation}
is $\epsilon$-dense and is made of fixed points of $\Gamma_{k,\ell}$.

\begin{rem}
The second assertion shows the finite exceptional orbits in Theorem 0.2 of~\cite{Dolce-Tropeano} can be arbitrarily large (resp. $\epsilon$-dense) 
when one reduces the size of the group (see also~Corollary 1.2 of~\cite{CTZ}).
\end{rem}
\section{From the projective case to the Kähler case} 

 \vspace{0.2cm}
\begin{center}
\begin{minipage}{12cm}
{\sl{
The purpose of this section is to deduce Theorem~A in the non-projective setting from the case of projective hyperkähler manifolds.
For this, we apply the method of degenerate twistor deformations developed by Verbitsky   and  Soldatenkov  in \cite{verbitsky:degenerate, soldatenkov-verbitsky}.
 }}
\end{minipage}
\end{center}
\vspace{0.2cm}

According to Section~\ref{par:proof-thmA-projective}, we only need to prove the second assertion of Theorem~A, since the first one has already been verified for all hyperkähler manifolds and the third one is a consequence of the second.

\subsection{The non-projective setting}\label{par:non-projective-setting}
Let $X$ be a {\sl{non-projective}}, irreducible, hyperk\"ahler manifold,   with a fixed holomorphic structure, equipped with some holomorphic lagrangian fibration $p\colon X\to B$.  Let $\sigma$ be a holomorphic symplectic form on $X$.
Recall from Section~\ref{par:lagrangian-fibration} that $B$ is projective.
We denote by $h$ the pull-back of a fixed very ample class in $\NS(B)$. 
The class $h\in \NS(X)$ is nef and satisfies $q_X(h)=0$.

Since $X$ is not projective, Huybrechts' Theorem (see \S~\ref{par:neron-severi-group}) shows that  the form $q_X$ is negative semi-definite on $\NS(X;\R)$, with one-dimensional kernel $\R h$. In  other words, with the vocabulary from Sections~\ref{par:neron-severi-group} and~\ref{par:transcendental_lattice}, the lattice $(\NS(X), q_X)$ is parabolic and the class $h$ is an element of both $\NS(X)$ and the transcendental lattice $\T(X)$.

Consider a parabolic automorphism $f\colon X\to X$ such that $p\circ f=f_B\circ p$ for some automorphism $f_B$ of $B$. By definition, $f^*h=h$. Moreover, according to 
Section~\ref{par:automorphisms},  there is a Kähler form $\kappa_B$ on $B$ 
such that $f_B^*\kappa_B=\kappa_B$; we can  assume furthermore that $[\kappa_B]=h$. 


In the non-projective case, it follows from Oguiso's results that $f^*\sigma=\sigma$ (see Assertion (1) of Theorem~B and the comment after this theorem). 

\subsection{Twistor deformations} 
{\sl{A {\sc{c}}-symplectic form on a differentiable ma\-nifold $M$ of dimension $4n$ is a closed, complex-valued $2$-form $\Omega$ such that $\Omega^{n+1}=0$ and $\Omega^{n}\wedge \overline {\Omega^{n}}$ is everywhere non-vanishing}}  (see Definition 2.1 in \cite{soldatenkov-verbitsky}).  The first main properties of such a {\sc{c}}-symplectic form are (see \cite{soldatenkov-verbitsky}):
\begin{enumerate}[\rm (1)]
\item the kernel of $\Omega$ on the complexified tangent space $T_\C M$ is everywhere of rank $2n$ and can be seen as the antiholomorphic tangent bundle of a complex structure $\J$; 
\item with respect to this complex structure, $\Omega$ is a holomorphic symplectic form.
\end{enumerate}
Take $X$ as in~Section~\ref{par:non-projective-setting}. On the differentiable manifold $X$, consider the family of differentiable forms $\Omega_t=\sigma + tp^*\kappa_B$. According to  \cite[Theorem 2.3]{soldatenkov-verbitsky}, 
\begin{enumerate}[\rm (1)]
\item[(3)]  for any $t$, $\Omega_t$ is a {\sc{c}}-symplectic form. 
\end{enumerate}
Thus, the family $\Omega_t$ defines a family of complex structures $\J_t$, hence a family of complex manifolds (on the same  underlying differentiable manifold) such that $X_0=X$, $\Omega_t$ is a holomorphic symplectic form on $X_t$, and therefore $H^{2,0}(X_t)$ is generated by $[\Omega_t]=[\sigma] + th$ for each $t$. Note, moreover, that 
\begin{enumerate}[\rm (1)]
\item[(4)] the map $p:X_t\to B$ remains a holomorphic lagrangian fibration, and the complex structure on the fibers of $p$ does not change. \end{enumerate} 
Indeed, $p^*\kappa_B$ vanishes identically along each fiber of $p$.

\subsection{Conclusion}   
Now, the next proposition is almost obvious. 

\begin{pro} The diffeomorphism $f:X\to X$ is holomorphic with respect to all the complex structures $\J_t$, $t\in \C$.
\end{pro}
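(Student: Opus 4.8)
The plan is to verify directly that $f$ respects each almost-complex structure $\J_t$, using only two facts already established: $f^*\sigma=\sigma$ (true in the non-projective case by Oguiso, see Theorem~B), and $f_B^*\kappa_B=\kappa_B$ together with $p\circ f=f_B\circ p$. Since $\J_t$ is determined by the \textsc{c}-symplectic form $\Omega_t=\sigma+tp^*\kappa_B$ — namely, its antiholomorphic tangent bundle is the kernel of $\Omega_t$ in $T_\C X$ — it suffices to show that $f$ preserves $\Omega_t$ up to a nonzero scalar, which forces $f$ to preserve $\ker\Omega_t$ and hence to be holomorphic for $\J_t$.

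First I would compute $f^*\Omega_t = f^*\sigma + t\, f^*(p^*\kappa_B)$. The first term is $\sigma$ by hypothesis. For the second term, $f^*p^*\kappa_B = (p\circ f)^*\kappa_B = (f_B\circ p)^*\kappa_B = p^*(f_B^*\kappa_B) = p^*\kappa_B$. Hence $f^*\Omega_t = \sigma + tp^*\kappa_B = \Omega_t$, so $f$ preserves $\Omega_t$ exactly, not merely up to scalar. Consequently $f^*$ maps $\ker(\Omega_t)\subset T_\C X$ onto itself; since this kernel is precisely the $(0,1)$-tangent bundle of $\J_t$ by property~(1) of \textsc{c}-symplectic forms, $Df$ commutes with $\J_t$, i.e.\ $f$ is holomorphic on $X_t$ for every $t\in\C$.

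I do not expect any serious obstacle here: the statement really is "almost obvious" once one unwinds definitions, and the only inputs are the scalar-free invariance $f^*\sigma=\sigma$ (which genuinely needs the non-projective hypothesis, as flagged in Theorem~B) and the equivariance of $p$ with respect to the $f_B$-invariant form $\kappa_B$ from Lo~Bianco's construction in Section~\ref{par:automorphisms}. One minor point worth a sentence in the write-up is that $\Omega_t$ being a diffeomorphism-invariant tensor, its kernel varies naturally, so the identity $f^*\Omega_t=\Omega_t$ immediately gives $(Df)^{-1}(\ker\Omega_t)=\ker\Omega_t$ pointwise; combined with property~(1)--(2) of \cite[Theorem 2.3]{soldatenkov-verbitsky} this is exactly holomorphy for $\J_t$. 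If one prefers, one can instead argue that $f^*\J_t f_* = \J_{t}$ directly from $f^*\Omega_t=\Omega_t$ and the functoriality of the construction $\Omega\mapsto\J_\Omega$, which is perhaps the cleanest phrasing.
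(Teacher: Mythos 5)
Your proof is correct and is essentially identical to the paper's: both establish $f^*\Omega_t=\Omega_t$ from $f^*\sigma=\sigma$ and the equivariance $f^*p^*\kappa_B=p^*f_B^*\kappa_B=p^*\kappa_B$, then conclude that $f$ preserves $\ker\Omega_t$, i.e.\ the antiholomorphic tangent bundle of $\J_t$. You merely spell out the pullback computation that the paper leaves implicit.
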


\begin{proof} First, recall that we assume that the parabolic automorphism is symplectic with respect to the initial symplectic structure $\sigma=\Omega_0$. Moreover, $f_B$ preserves $\kappa_B$. Thus, $f$ 
preserves each of {\sc{c}}-symplectic forms $\Omega_t=\sigma+tp^*\kappa_B$.  In particular, $f$ preserves its kernel and the conjugate of the kernel, which are the antiholomorphic and holomorphic tangent bundles to $X_t$, respectively.\end{proof}

With this construction at hand, the keypoint is that some of the $X_t$ are projective for arbitrary small parameters $t\in \C$. 

\begin{pro} For any $r>0$, there  exists $t\in \C$ with $\vert t\vert <1/r$ such that   $X_t$ is projective.
\end{pro}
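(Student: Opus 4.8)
The plan is to use the Beauville--Bogomolov form and the density of periodic Hodge structures among the family $X_t$. Recall from Section~\ref{par:neron-severi-group} that $X_t$ is projective precisely when there is a class $u \in \NS(X_t; \R)$ with $q(u,u) > 0$; more precisely, by Huybrechts' theorem it suffices to exhibit an integral class $v \in H^2(X; \Z)$ that is of type $(1,1)$ on $X_t$ and satisfies $q(v,v) > 0$. Since $X_t$ is obtained from the same underlying lattice $H^2(X;\Z)$ with the same Beauville--Bogomolov form, and only the Hodge structure (the position of $H^{2,0}(X_t) = \C([\sigma] + th)$) changes with $t$, the question becomes: for which $t$ does there exist $v \in H^2(X; \Z)$ with $q(v, [\sigma] + th) = 0$ and $q(v,v) > 0$?

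First I would spell out the condition $q(v, [\sigma]+th) = 0$: writing $q(v,[\sigma]) = a$ and $q(v, h) = b$ (with $a \in \C$, $b \in \Z$ since $h$ is integral), the condition is $a + tb = 0$, i.e. $t = -a/b$ when $b \neq 0$. Next I would observe that $h$ itself is an integral class with $q(h,h) = 0$, so $h$ is always of type $(1,1)$ on every $X_t$ but is isotropic, hence does not by itself force projectivity. The idea is instead to find integral classes $v$ with $q(v,h) \neq 0$ and $q(v,v) > 0$; for such a $v$, the corresponding value $t_v = -q(v,[\sigma])/q(v,h)$ makes $v$ algebraic on $X_{t_v}$ and witnesses projectivity. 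So the task reduces to showing that the set $\{ -q(v,[\sigma])/q(v,h) : v \in H^2(X;\Z),\ q(v,h) \neq 0,\ q(v,v) > 0 \}$ accumulates at $0$.

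To see the accumulation, I would argue as follows. The signature of $q$ on $H^2(X;\R)$ is $(3, b_2 - 3)$, so there are plenty of integral classes $v$ with $q(v,v) > 0$ — indeed the set of such real classes is open. Since $h \neq 0$, the hyperplane $\{q(\cdot, h) = 0\}$ is proper, so we may choose $v_0$ integral with $q(v_0, v_0) > 0$ and $q(v_0, h) \neq 0$; then for any integer $N$, the class $v_N := v_0 + N h$ is integral, satisfies $q(v_N, v_N) = q(v_0,v_0) + 2N q(v_0, h)$, and $q(v_N, h) = q(v_0,h)$ is a fixed nonzero integer. For $N$ ranging over an infinite set of integers with $N q(v_0,h) > 0$ we keep $q(v_N, v_N) > 0$, and
\[
t_{v_N} = -\frac{q(v_N, [\sigma])}{q(v_N, h)} = -\frac{q(v_0,[\sigma]) + N\, q(h, [\sigma])}{q(v_0, h)}.
\]
Here the key point is that $q(h, [\sigma]) = 0$: indeed $h$ lies in the transcendental lattice $\T(X)$ by the last sentence of Section~\ref{par:non-projective-setting}, and $[\sigma]$ is orthogonal to nothing useful directly — so instead I would use that $h$ is of type $(1,1)$ while $[\sigma]$ spans $H^{2,0}(X)$, and $H^{2,0} \perp H^{1,1}$ for $q$ by the Hodge--Riemann relations, giving $q(h, [\sigma]) = 0$. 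Hence $t_{v_N} = -q(v_0,[\sigma])/q(v_0,h)$ is \emph{independent of $N$}, which is the wrong conclusion — so this naive translation by $h$ fails, and this is exactly the main obstacle: one must genuinely vary the class $v$ in a direction not parallel to $h$ while keeping $q(v,h)$ large in comparison, so that $t_v \to 0$.

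The honest fix, and the real content, is a density argument: consider the set $P \subset \disk$ of periods $[\Omega_t] = [\sigma] + th$ as $t$ ranges over a disk, a line in the period domain $\Omega_X \subset \P(H^2(X;\C))$; the locus of periods on which \emph{some} positive integral class becomes $(1,1)$ is a countable union of ``Noether--Lefschetz'' hyperplane sections of the period domain, each of the form $\{ \omega : q(\omega, v) = 0 \}$ for $v$ integral with $q(v,v) > 0$. Since the quadric $\Omega_X$ is irreducible of dimension $b_2 - 2 \geq 2$ and these hyperplanes are dense (their normal vectors $v$ being $\Z$-dense among positive classes, which are an open cone), their union is dense in $\Omega_X$; intersecting with the line $\{[\sigma] + th\}$, which is not contained in any single such hyperplane (because $X = X_0$ is non-projective, so $t=0$ avoids all of them, and the line is not constant), one gets that the parameters $t$ with $X_t$ projective are dense in the disk $|t| < 1/r$, hence in particular nonempty. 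I expect the main obstacle to be precisely this density claim: one needs to know that the set of integral classes $v$ with $q(v,v)>0$ is ``Zariski dense'' enough in the relevant sense — concretely, that for the fixed line $\ell(t) = [\sigma] + th$ there exist $v \in H^2(X;\Z)$ with $q(v,v) > 0$ and $q(v, \ell(t)) = 0$ for $t$ arbitrarily close to $0$; this follows from an approximation argument using that $q$ has signature $(3, b_2-3)$ restricted to the real span of $\{\mathrm{Re}\,[\sigma], \mathrm{Im}\,[\sigma], h\}$ (a positive $3$-plane), so that suitable integral perturbations of a positive class remain positive while their orthogonality value $t$ can be pushed to $0$ by Dirichlet-type simultaneous approximation of the real and imaginary parts of $q(v,[\sigma])/q(v,h)$.
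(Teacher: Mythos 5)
There is a genuine gap. Your framework is the right one (and is the paper's): by Huybrechts' criterion, $X_t$ is projective once $\NS(X_t;\R)$ contains a class of positive square, and an integral class $v$ lies in $\NS(X_t)$ iff $q(v,[\sigma]+th)=0$, i.e.\ $t=-q(v,[\sigma])/q(v,h)$ when $q(v,h)\neq 0$. But you never actually produce such a $v$ with $\vert q(v,[\sigma])/q(v,h)\vert<1/r$: your first attempt ($v_N=v_0+Nh$) correctly fails to move $t$, and your replacement is a density-of-Noether--Lefschetz-loci assertion that you yourself flag as the unproved main obstacle, deferred to an unspecified ``Dirichlet-type simultaneous approximation.'' That deferral is where the proof is missing; moreover the heavy machinery is unnecessary. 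The paper's argument is elementary: since $\sigma$ and $h$ are linearly independent and $q$ is non-degenerate, the linear forms $q(\cdot,\sigma)$ and $q(\cdot,h)$ are linearly independent, so the condition $0<r\,q(a,\sigma)<q(a,h)$ cuts out a nonempty open cone in $H^2(X;\R)$; any nonempty open cone contains an integral class $a$ (dilate a ball inside it until it has radius $>\sqrt{b_2}/2$). Then $t=-q(a,\sigma)/q(a,h)$ satisfies $\vert t\vert<1/r$ and $a\in\NS(X_t)$. No equidistribution of periods is needed.

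Two further points. First, the computation you discarded as a dead end is exactly the device that finishes the argument: because $q(h,[\sigma])=q(h,h)=0$, the class $h$ lies in $\NS(X_t)$ for \emph{every} $t$, and replacing $a$ by $a+Nh$ does not change $t$ but changes $q(a+Nh,a+Nh)=q(a,a)+2Nq(a,h)$, which is $>0$ for a suitable integer $N$ since $q(a,h)\neq 0$. So you should not insist on $q(v,v)>0$ when choosing $v$ (that constraint is what pushes you toward the density argument); choose $v$ only to control $t$, then translate by $h$ to gain positivity at no cost. Second, a step both you and the naive reading omit but which must be said: Huybrechts' criterion is a statement about hyperkähler manifolds, so one needs $X_t$ to be Kähler for the small parameters under consideration; this holds because being Kähler is an open condition in the deformation.
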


\begin{proof} For $t$ small, $X_t$ is Kähler, because being Kähler is an open property (see~\cite{voisin:book-hodge}). Thus, for small parameters, we obtain a family of irreducible hyperkähler manifolds (indeed the Hodge numbers are constant in families of K\"ahler manifolds). 

From Huybrechts' Theorem, we know that $X_t$ is projective when it carries an integral $(1,1)$-class $u$ with $q(u,u)>0$. Moreover,  an integral class $u\in H^2(X;\Z)$ is 
of type $(1,1)$ with respect to the complex structure $\J_t$ when it is $q$-orthogonal to the class $[\Omega_t]$.

Fix $r\geq 1$ large, and then choose a class 
$a\in H^2(X,\Z)$ such that 
\begin{equation}
0 < rq(a,\sigma) < q(a,h);
\end{equation} 
such a class exist because $q(\cdot, \sigma)$ and $q(\cdot, h)$ are two linearly independant linear forms on $H^2(X;\C)$. 
Now, $a$ is of type $(1,1)$ on $X_t$ if and only if $t=-q(a,\sigma)/q(a,h)$. This defines a unique $t$, of modulus $<1/r$. \end{proof}

 Since Assertion (2) of Theorem~A does not depend on the complex structure $\J_t$ but only on the dynamical properties of $f$, we can now apply Theorem~A on $X_t$ to derive the same conclusion on $X_0$. This concludes the proof of our main theorem in the non-projective setting.

\subsection{Extension of Lo Bianco's theorem}\label{par:proof-lobianco-non-projective} 
A similar argument can be applied to extend the second assertion of Theorem~B from the projective to the non-projective setting. Indeed, along a twistor deformation $(X_t, \Omega_t)$, with $\Omega_t=\sigma+tp^*\kappa_B$, the action of $f_t$ on the base of its invariant fibration does not change. Since for  some  parameters $t$ we know that $X_t$ is projective, we can apply Lo Bianco's result to conclude that $f_B$ has finite order.


\section{Appendix}\label{appendix} 
{\small{


\subsection{Parabolic isometries} 
Let $V$ be a real vector space of (finite) dimension $m+1$ endowed with a non-degenerate quadratic form $q_V$ of signature $(1,m)$. Let $h\in {\mathsf{O}}(q_V)$ be a linear transformation of $V$ preserving $q_V$. 
By definition, $h$ is parabolic if it does not fix any vector $v\in V$ with $q_V(v)=1$ and if all its eigenvalues have modulus $1$. Equivalently, $1$ is an eigenvalue of $h$, but the corresponding eigenspace $\{v\; ; \; h(v)=v\}$ does not intersect $\{v\; ; \; q_V(v)>0\}$. When $h$ is parabolic, there is a unique isotropic line $D_h\subset V$ which is $h$-invariant, and this line is fixed pointwise.
For this, we refer to Ratcliffe's book on hyperbolic geometry~\cite{Ratcliffe:book}.

\begin{rem}
If $h$ is a parabolic isometry and $h$ preserves a subspace $W\subset V$ on which $q_V$ is non-degenerate and indefinite, then $h_{\vert W}\colon W\to W$ is also parabolic. 

If $\dim(V)\leq 2$, there is no parabolic isometry; indeed, if $\dim(V)=2$ the isotropic cone is made of two lines, and a parabolic isometry should preseve each of them, with eigenvalue $1$ on one of them, hence with eigenvalue $\pm 1$ on the second (because $\det(\cdot)=\pm 1$ on $ {\mathsf{O}}(q_V)$), but then the isometry would have order $1$ or $2$.
\end{rem}

\begin{pro}\label{pro:parabolic_growth}
Let $h$ be such a parabolic isometry. Then, given any operator norm $\norm{\cdot}$ on $\End(V)$, there is a constant $c(h)>0$ such that $\norm{h^n}\simeq c(h) n^2$.
\end{pro}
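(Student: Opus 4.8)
The plan is to use the classification of parabolic isometries via their Jordan form and reduce to an explicit matrix computation. Since $h$ preserves a quadratic form of signature $(1,m)$ and all its eigenvalues have modulus $1$ while being roots of a polynomial that need not have integer coefficients here (but $h$ is parabolic, so by definition $1$ is the only relevant eigenvalue on the invariant isotropic line), the key structural fact I would invoke is that a parabolic isometry of a Lorentzian space has, over $\R$, a single nontrivial Jordan block of size $3$ associated to the eigenvalue $1$, and is the identity (or semisimple with eigenvalues of modulus $1$) on a complementary $q_V$-negative-definite subspace. More precisely, I would first show that $h$ is quasi-unipotent: all eigenvalues are roots of unity, so a power $h^k$ is unipotent, and it suffices to prove the growth estimate for the unipotent case since $\norm{h^{kn+j}}$ and $\norm{(h^k)^n}$ differ by bounded multiplicative factors and the growth rate is unchanged.

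Next, for $h$ unipotent, write $h = \Id + N$ with $N$ nilpotent, and the goal becomes showing the largest Jordan block of $N$ has size exactly $3$. One direction: the block size cannot exceed $3$, because a block of size $\geq 4$ would force, via the $h$-invariance of $q_V$, an $h$-invariant subspace on which $q_V$ has a totally isotropic subspace of dimension $\geq 2$, contradicting signature $(1,m)$; the standard argument is that for an isometry the nilpotent part $N$ satisfies $q_V(N^a u, N^b v) = \pm q_V(u, N^{a+b}v)$ up to lower-order unipotent corrections, so the size-$r$ block carries a nondegenerate pairing between the top and bottom, and a $q_V$ of Lorentzian signature only admits isotropic flags of length $\leq 3$ (equivalently: the positive cone is nonempty but $1$-dimensional in the hyperbolic sense). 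The other direction: the block size is at least $3$ because a parabolic isometry is by definition not elliptic, so $N \neq 0$, and a size-$2$ block would make $h$ either loxodromic-type or would place a fixed vector in the positive cone — I would spell out that a size-$2$ unipotent block in $\mathsf{O}(1,m)$ forces the fixed space to meet $\{q_V > 0\}$, contradicting parabolicity (this is essentially the Remark preceding the Proposition, which already rules out $\dim V \leq 2$ and gives the template).

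Once the Jordan structure is pinned down — one block $\begin{pmatrix} 1 & 1 & 0 \\ 0 & 1 & 1 \\ 0 & 0 & 1 \end{pmatrix}$ on a $3$-dimensional subspace, and an orthogonal complement on which $h$ acts with finite order — the estimate is immediate: $(h^k)^n$ restricted to the $3$-block is $\begin{pmatrix} 1 & n & \binom{n}{2} \\ 0 & 1 & n \\ 0 & 0 & 1 \end{pmatrix}$ (after passing to the unipotent power $h^k$), whose operator norm is $\sim c n^2$, while on the complement the norm stays bounded; hence $\norm{h^n} \simeq c(h) n^2$ with $c(h) > 0$. Since all operator norms on $\End(V)$ are equivalent, the constant $c(h)$ depends on the norm but the $\simeq n^2$ asymptotic does not, and the stated form of the conclusion follows. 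I would also note that one can extract $c(h)$ explicitly as a limit $\lim_n \norm{h^n}/n^2$, which exists by the explicit form, giving the two-sided bound rather than just big-$O$.

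The main obstacle I anticipate is the clean proof that no Jordan block of size $\geq 4$ can occur for an isometry of a signature-$(1,m)$ form: one must argue carefully with the bilinear form restricted to the (generalized) eigenspace and the induced pairing on the associated graded of the nilpotent filtration, checking that a long block would produce a $2$-dimensional totally isotropic subspace, which Lorentzian signature forbids (the Witt index of $q_V$ is exactly $1$). This is standard in the theory of isometries of Lorentzian lattices, so I would either cite Ratcliffe's book (already in the bibliography as~\cite{Ratcliffe:book}) for the normal form of parabolic isometries, or give the three-line Witt-index argument directly; everything after that is a routine matrix asymptotic.
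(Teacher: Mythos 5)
Your overall plan (split off the generalized $1$-eigenspace, show the unipotent part is a single Jordan block of size exactly $3$ plus identity, read off quadratic growth) is the right one and matches the paper's in spirit, but one step is genuinely false as written. You claim that ``all eigenvalues are roots of unity, so a power $h^k$ is unipotent,'' and later that $h$ has ``finite order'' on the negative-definite complement. The Proposition is stated for an abstract isometry of a real quadratic space of signature $(1,m)$: there is no integral lattice here, so eigenvalues of modulus $1$ need not be roots of unity. For instance, the direct sum of a size-$3$ unipotent block on a $(1,2)$-subspace with an irrational rotation on a negative-definite plane is a parabolic isometry no power of which is unipotent. The fix is exactly what the paper does: decompose $V=E_1\oplus E_1^{\perp}$ with $E_1=\ker(h-\Id)^r$, observe that $q_V$ is negative definite on $E_1^{\perp}$, so $h_{\vert E_1^{\perp}}$ lies in a compact group and $\norm{h^n_{\vert E_1^{\perp}}}$ is merely \emph{bounded} (not eventually trivial); boundedness is all the final estimate needs, since $h^n/n^2\to N^2/2$ where $N=h_{\vert E_1}-\Id$.

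On the main structural point you take a somewhat different route from the paper. You propose to classify Jordan blocks of isometries (no block of size $\geq 4$ via the Witt-index-$1$ argument, no top block of size $2$), whereas the paper avoids the classification entirely: it shows the unipotent $h$ induces the identity on $D_h^{\perp}/D_h$ (a unipotent isometry of a negative definite form is trivial), then takes $v_2$ with $q_V(v_2)=1$, sets $v_1=h(v_2)-v_2$ and $v_0=h(v_1)-v_1\in D_h$, checks $v_0\neq 0$, and exhibits the size-$3$ block directly, with $h=\Id$ on the orthogonal complement. Your size-$\geq 4$ exclusion is fine (for skew-adjoint nilpotent $N$ one has $q_V(N^au,N^bv)=(-1)^aq_V(u,N^{a+b}v)$, so $\mathrm{span}(N^{r-2}u,N^{r-1}u)$ is $2$-dimensional totally isotropic when $r\geq 4$), but your size-$2$ exclusion is misattributed: a size-$2$ block does not ``place a fixed vector in the positive cone''; rather, a rank-one skew-adjoint $N$ with $N^2=0$ is already impossible for a non-degenerate form, and a pair of size-$2$ blocks again yields a $2$-dimensional isotropic subspace. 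You would need to supply that argument; the paper's explicit construction sidesteps it. With the compactness fix and the size-$2$ step made precise, your proof is complete and slightly more general in flavor, at the cost of more case analysis than the paper's three-vector computation.
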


\begin{proof}
The characteristic polynomial of $h$ can be written $P_h(t)=(t-1)^rQ(t)$ where $Q\in \R[t]$ and $Q(1)\neq 0$. From this, we get a decomposition $V=E_1\oplus E_Q$ where $E_1$ is the kernel 
of $(h-\Id_V)^r$ and $E_Q$ is the kernel of $Q(h)$. This is an orthogonal decomposition $E_1^\perp =E_Q$.  In particular, the restriction of $q_V$ to $E_1$  is non-degenerate. The line $D_h$ is contained in $E_1$. Thus, the signature of $q_V$ on $E_1$ is $(1,\dim(E_1)-1)$, and $q_V$ is negative definite on $E_Q$. In particular, the restriction of $h$ to $E_Q$ is in a compact group.

Thus, we can now assume that $V$ is equal to $E_1$. In other words,  $h$ is unipotent. Then, on $D_h^\perp/D_h$, the endomorphism induced by $h$ is unipotent and preserves a negative definite quadratic form; thus, it is equal to the identity. 

Let $v_2$ be an element of $V$ such that $q_V(v_2)=1$. Set $v_1=h(v_2)-v_2$. Then $v_1\neq 0$ (because $h$ is parabolic) and $v_1$ is orthogonal to $D_h$. Thus, $v_1=h(v_1)-v_0$ for some $v_0\in D_h$. The vector $v_0$ is not $0$, because otherwise $h$ would induce a parabolic isometry of the $2$-dimensional space ${\mathrm{Vect}}(v_1,v_2)$. Thus, the vector space $W={\mathrm{Vect}}(v_1,v_2,v_3)$ is $h$-invariant, has dimension $3$, and contains $D$. In the basis $(v_1,v_2,v_3)$, the matrix of $h$ is a Jordan bloc of size $3$, and the growth of $\norm{h^n}$
is quaratic. On the orthogonal complement $W^\perp$, $h$ is the identity. This concludes the proof. \end{proof}

\subsection{Parabolic automorphisms}  Combining the two previous sections, we get the notion of parabolic automorphism of hyperkähler manifolds: these are automorphisms $f\colon X\to X$ such that $f^*$ determines a parabolic isometry of $V:=H^{1,1}(X;\R)$ with respect to the  Beauville-Bogomolov quadratic form $q_V:=q$. Then, Proposition~\ref{pro:parabolic_growth} proves the equivalence between  Assertions (a) and (c) from Section~\ref{par:loxo_para_elli}. The equivalence with (b) comes from the fact that all eigenvalues of $f^*$ are roots of unity because $f^*$ preserves the lattice $H^2(X;\Z)$ in $H^2(X;\R)$.

}}

\bibliographystyle{plain}
\bibliography{referencesParabolic}

\end{document}